\newcommand{\negeer}[1]{}
\def\notintp{\mathrel{/\kern-0.6em\rhd}} 
\renewcommand{\qedsymbol}{$\dashv$}
\newcommand{\inty}{interpretability\xspace}
\newcommand{\pra}{\ensuremath{{\mathrm{PRA}}}\xspace}
\newcommand{\pa}{\ensuremath{{\mathrm{PA}}}\xspace}
\newcommand{\isig}[1]{{\ensuremath {\mathrm{I}\Sigma_{#1}}}\xspace}
\newcommand{\collection}[1]{{\ensuremath{\mathrm{B#1}}\xspace}}
\newcommand{\sonetwo}{{\ensuremath{{\sf S^1_2}}}\xspace}
\newcommand{\zfc}{{\ensuremath{\sf ZFC}}\xspace}
\newcommand{\buss}{\sonetwo}
\newcommand{\formal}[1]{\ensuremath{{\sf {#1}}\xspace}}
\newcommand{\cons}[1]{{\ensuremath{\formal{Con} (#1)\xspace}}}
\newcommand{\cutcons}[2]{{\ensuremath{\formal{Con}^{#2} (#1)\xspace}}}
\newcommand{\pred}{\formal{Pred}}
\newcommand{\axioms}[2]{{\ensuremath{{\formal{Axiom}}_{#1}(#2)\xspace}}}
\newcommand{\bewijs}[2]{{\ensuremath{{\formal{Proof}}_{#1}(#2)\xspace}}}
\newcommand{\freevar}[1]{\formal{FV}{\ensuremath{(#1)}}\xspace}
\newcommand{\restcons}[2]{{\ensuremath{\formal{Con}_{#2}(#1)}}\xspace}
\newcommand{\omeja}[2]{\ensuremath{\omega_1^{#2}(#1)}\xspace}
\newcommand{\translated}[2]{{\ensuremath{{#1}^{#2}}\xspace}}
\newcommand{\trans}[2]{{\ensuremath{{#1}^{#2}}\xspace}}
\newcommand{\transl}[2]{{\ensuremath{{#2}({#1})}\xspace}}
\newcommand{\bigo}[1]{{\ensuremath{\mathcal{O}(#1)}\xspace}}
\newcommand{\concat}[2]{{\ensuremath{#1 \star #2}\xspace}}
\newcommand{\numeral}[1]{{\ensuremath{\overline{#1}}\xspace}}
\newcommand{\pair}[1]{{\ensuremath{\langle #1 \rangle}}\xspace}
\newcommand{\restrict}[2]{{\ensuremath{#1,#2}}\xspace}
\newcommand{\eqbydef}{:=}
\newcommand{\initialsegment}{\preceq_{\sf end}}
\newcommand{\suc}{{\ensuremath{S}}\xspace}
\newcommand{\finapr}[2]{{\ensuremath{#1[#2]}}\xspace}
\newcommand{\expo}{{\ensuremath{{\sf exp}}}\xspace}
\newcommand{\nat}{{\ensuremath{\mathbb{N}}}\xspace}
\theoremstyle{plain}
\newtheorem{theorem}{Theorem}[section]
\newtheorem{lemma}[theorem]{Lemma}
\newtheorem{corollary}[theorem]{Corollary}
\theoremstyle{definition}
\newtheorem{definition}[theorem]{Definition}
\newtheorem{defi}[theorem]{Definition}
\newtheorem{claim}{Claim}
\newtheorem{remark}[theorem]{Remark}
\theoremstyle{remark}
\newcommand{\verz}[1]{\{ #1 \}}
\newcommand{\tupel}[1]{\langle #1 \rangle}
\newcommand{\Index}[1]{#1\index{#1}}
\begin{document}

\title{Characterizations of interpretability in bounded arithmetic}
\author{Joost J. Joosten\\
University of Barcelona}

\maketitle

\begin{abstract}
This paper deals with three tools to compare proof-theoretic strength of formal arithmetical theories: interpretability, $\Pi^0_1$-conservativity and proving restricted consistency. It is well known that under certain conditions these three notions are equivalent and this equivalence is often referred to as the Orey-H\'ajek characterization of interpretability.

In this paper we look with detail at the Orey-H\'ajek characterization and study what conditions are needed and in what meta-theory the characterizations can be formalized.
\end{abstract}

\section{Introduction}

Interpretations are everywhere used in mathematics and mathematical logic. Basically, a theory $U$ interprets a theory $V$ --we write $U\rhd V$-- whenever there is some translation from the symbols of the language of $V$ to formulas of the language of $U$ so that under a natural extension of this translation the axioms of $V$ are mapped to theorems of $U$. 

The corresponding intuition should be that $U$ is at least as strong or expressible as $V$. And indeed, interpretations are used for example to give relative consistency proofs or to establish undecidability of theories. As such, interpretations are considered an important metamathematical notion. Probably, the first time that interpretations received a formal and systematic treatment has been in the book by A.~Tarski, A.~Mostowski and R.~Robinson (\cite{tars:unde53}). In the current paper we will study that notion of interpretability and also some related notions. Sometimes we speak of \emph{relative} interpretability as to indicate that quantifications become relativized to some domain specifier as we shall define precisely later on. 

We will relate the notion of relative interpretability to two other basic metamathematical notions. The first such notion is the notion of consistency. The notion of consistency is central to mathematical logic and considered key and fundamental. 

A second notion is that of $\Pi_1^0$ conservativity. Below we will exactly define what $\Pi^0_1$ formulas are, but basically, those are formulas in the language of arithmetic which are of the form $\forall x \psi(x)$ where $\psi$ is some decidable predicate. On the other hand, $\Sigma^0_1$ formulas are those of the form $\exists x \psi (x)$ for decidable $\psi$. Since all true theories prove exactly the same set of $\Sigma^0_1$ sentences, the first natural and interesting class of formulas to distinguish theories is on the $\Pi^0_1$ level. Therefore, the notion of $\Pi^0_1$ conservativity has been very central in mathematical logic and foundational discussions. We say that a theory $U$ is $\Pi^0_1$ conservative over $V$ whenever any $\Pi^0_1$ sentence provable by $V$ is also provable by $U$.

The main purpose of this paper is to discuss how these three different notions are related to each other in certain circumstances. This relation is known as the \emph{Orey-H\'ajek} characterization of relative interpretability. 

As such, the paper contains many well-known results and various formulations are taken from \cite{Joosten:2004:InterpretabilityFormalized}. However, we think that it is instructive that all these results are put together and moreover that a clear focus is on the requirements needed so that various implications are formalizable in weak theories. 

Apart from the main focus --which is bringing together facts of the Orey-H\'ajek characterization of relative interpretability and formalizations thereof-- the paper contains a collection of new observations that might come in handy. For example, our simple generalization of Pudl\'ak's lemma as formulated in Lemma \ref{theorem:KeepWitnessSmall} has been a main tool in proving arithmetical correctness of a new series of interpretability principles in \cite{Joosten:2015:TwoSeries}.

\section{Preliminaries}\label{sect:intro}

As mentioned before, a central notion in this paper is that of consistency. Consistency is a notion that concerns syntax: no sequence of symbols that constitute a proof will yield the conclusion that $0=1$. It shall be an important criterion whether or not a theory proves the consistency of another. As such we want that theories can talk about syntax. 

The standard choice to represent syntax is by G\"odel numbering, assigning natural numbers to syntax. Thus, our theories should contain a modicum of arithmetic. In this section we shall make some basic observations on coding and then fix what minimal arithmetic we should have in our base theory. We shall formulate some fundamental properties of this base theory and refer to the literature for further background. Further, we shall fix the notation that is used in the remainder of this paper. 

\subsection{A short word on coding}\label{subs:coding}

Formalization calls for coding of syntax.
At some places in this paper we shall need estimates of codes of
syntactical objects. Therefore it is good to discuss the nature of the
coding process we will employ. However we shall not consider
the implementation details of our coding.

We shall code strings over some finite 
alphabet $A$ with cardinality $a$. A typical coding protocol could be the following. First we define an alphabetic order on
$A$. Next we enumerate all finite strings over $A$ in the following way (pseudo-lexicographic order).

To start, we enumerate all strings of length $0$, then of length $1$, etcetera.
For every $n$, we enumerate the strings of length $n$ in alphabetic order.
The coding of a finite string over $A$ will just be its ordinal number in 
this enumeration. 
We shall now see some easy arithmetical properties of this coding.
We shall often refrain from distinguishing syntactical objects and 
their codes.

\begin{enumerate}

\item
There are $a^n$ many strings of \Index{length} $n$.

\item \label{item:leqcodes}
There are $a^n + a^{n-1} \cdots + 1 = \frac{a^{n+1}-1}{a-1}$ many 
strings of length $\leq n$.

\item \label{item:bigo}
From (\ref{item:leqcodes}) it follows that the code of
a syntactical object of length $n$, is 
$\bigo{\frac{a^{n+1}-1}{a-1}} = \bigo{a^n}$ big.

\item
Conversely, the length of a syntactical object that has code $\varphi$
is  \bigo{ | \varphi |} (logarithm/length of $\varphi$) big.

\item
If $\varphi$ and $\psi$ are codes of syntactical objects, the \Index{concatenation}
\concat{\varphi}{\psi} of $\varphi$ and $\psi$ is  
\bigo{\varphi \cdot \psi} big. For, 
$|\concat{\varphi}{\psi}|=|\varphi| + |\psi|$, whence by (\ref{item:bigo}), 
$\concat{\varphi}{\psi} \approx a^{|\varphi| + |\psi|}=a^{|\varphi |} \cdot
a^{|\psi|} = \varphi \cdot \psi$.

\item \label{item:substitution}
If $\varphi$ and $t$ are (codes of) syntactical objects, then 
$\varphi_x (t)$ is  \bigo{\varphi^{|t|}} big. Here 
$\varphi_x (t)$ denotes the syntactical object that results from 
$\varphi$ by replacing every (unbounded) occurrence of $x$ by $t$.
The length of $\varphi$ is about $|\varphi |$. In the worst case,
these are all $x$-symbols. In this case, the length of 
$\varphi_x (t)$ is $|\varphi | \cdot |t |$ and thus
$\varphi_x (t)$ is  
$\bigo{a^{|\varphi | \cdot |t |}}= \bigo{t^{|\varphi |}}=
\bigo{\varphi^{|t|}} = \bigo{2^{|\varphi |\cdot |t|}}$ big.

\end{enumerate}

As mentioned, we shall refrain from the technical characteristics of our coding and refer to the literature for examples. Rather, we shall keep in mind restrictions on the sizes and bounds as mentioned above. Also, we shall assume that we work with a natural poly-time coding with poly-time decoding functions so that the code of substrings is always smaller than the code of the entire string.

\subsection{Arithmetical theories}

Since substitution is key to manipulating syntax we need, by our observations above, a function whose growth-rate can capture substitution. Thus, we choose to work with the smash function $\sharp$ defined by $x\sharp y := 2^{|x|\cdot |y|}$ where $|x| := \lceil \log_2 (x+1)\rceil$ is the length of the number $x$ in binary. We shall often also employ the function $\omega_1$ which is of similar growth-rate and defined by $\omega_1(x):= 2^{|x|^2}$.

Next, we need a certain amount of induction. For a formula $\varphi$, the regular induction formula $I_\varphi$ is given by
\[
\varphi (0) \ \wedge \  \forall x\ (\varphi (x) \to \varphi (x+1)) \ \to \ \forall x \ \varphi (x).
\]
However, it turns out that we can work with a weaker version of induction called polynomial induction denoted by PIND:
\[
\varphi (0) \wedge \forall x\ (\varphi (\lfloor \frac{1}{2}x \rfloor)  \to \varphi (x)) \ \to \forall x\ \varphi (x)
\]
or equivalently
\[
\varphi (0) \wedge \forall x\ (\varphi (x ) \to \varphi (2x)) \wedge \forall x\ (\varphi (x ) \to \varphi (2x{+}1)) \ \to \forall x\ \varphi (x).
\]
The idea is that one can conclude $\varphi(x)$ by only logarithmically many calls upon the induction hypothesis with this PIND principle. For example to conclude $\varphi(18)$ we'ld go $\varphi(0)\to\varphi(1)\to\varphi(2)\to\varphi(4)\to\varphi(9)\to\varphi(18)$. 

Typically, induction on syntax is of this nature and in order to conclude a property of (the G\"odel number of) some formula $\psi$ we need to apply the induction hypothesis to the number of subformulas of $\psi$ which is linear in the length of $\psi$. Thus, most inductions over syntax can be established by PIND rather than the regular induction schema.

Moreover, we shall restrict the formulas on which we allow ourselves to apply  PIND to so to end up with a weak base theory. As we shall see, most of our arguments can be formalized within Buss' theory\footnote{As mentioned, the 
substitution operation on codes of syntactical objects asks
for a function of growth rate $x^{|x|}$. In Buss's \sonetwo this is the \Index{smash function} \label{symb:sharp}$\sharp$.
In the theory $I\Delta_0+\Omega_1$ this is the 
function \omeja{x}{}. However, contrary to \sonetwo, the theory $I\Delta_0+\Omega_1$ --aka ${\sf S}_2$-- is not known to be finitely axiomatizable.} 
 \sonetwo.

The theory \sonetwo is formulated in the language of arithmetic $\{ 0, S, +, \cdot, \sharp, |x|, \lfloor \frac{1}{2}x\rfloor , \leq \}$. Apart from some basic axioms that define the symbols in the language, \sonetwo is axiomatized by PIND induction for $\Sigma_1^b$ formulas. The $\Sigma_1^b$ formulas are those formed from atomic formulas via the boolean operators, sharply bounded quantification and bounded existential quantification. Sharply bounded quantification is quantification of the from ${\mathcal Q} \ x{<}|t|$ for ${\mathcal Q}\in \{ \forall, \exists \}$. Bounded existential quantification in contrast, is of the form $\exists \, x{<}t$. We refer the reader for \cite{BusH2} or \cite{HP} for further details and  for the definitions of the related $\Sigma^b_n$ and $\Pi^b_n$ hierarchies.

Equivalent to the PIND principle (see \cite[Lemma 5.2.5]{Krajicek:1995:BoundedArithmeticPropLogicComplTheory}) is the length induction principle LIND:
\[
\varphi (0) \wedge \forall x\ (\varphi (x ) \to \varphi (x{+}1)) \ \to \forall x\ \varphi (| x|).
\]
So, from the progressiveness of $\varphi$, we can conclude $\varphi(x)$ for any $x$ for which the exponentiation is defined. We shall later see that if we are working with definable cuts (definable initial segments of the natural numbers with some natural closure properties) we can without loss of generality assume that exponentiation is defined for elements of this cut.

Although most of our reasoning can be performed in \sonetwo, we sometimes mention stronger theories. As always Peano Arithmetic (\pa) contains open axioms that define the symbols $0, S, +$ and $\cdot$ and induction axioms $I_\varphi$ for any arithmetical formula $\varphi$. Similarly, \isig{n} is as \pa where instead we only have induction axioms $I_\varphi$ for $\varphi \in \Sigma_n$. Here, $\Sigma_n$ refers to the usual arithmetical hierarchy (see e.g.~\cite{HP}) in that such formulas are written as a decidable formula preceded by a string of $n$ alternating quantifiers with an existential quantifier up front. In case no free variables are allowed in the induction formulas, we flag this by a superscript ``$-$" as in $\mathrm{I}\Sigma_{n}^{-}$.

Another important arithmetical principle that we will encounter frequently is collection. \label{symb:collection}
For example \collection{\Sigma_n} is the so-called \Index{collection} scheme for 
$\Sigma_n$-formulae. Roughly, \collection{\Sigma_n} says that the range of a 
$\Sigma_n$-definable function on a finite interval is again finite. A mathematical 
formulation is 
$\forall\,  x{\leq}u\, \exists y\ \sigma (x,y) \rightarrow 
\exists z\, \forall\,  x{\leq}u\, \exists \, y{\leq}z\ \sigma (x,y)$ where
$\sigma (x,y) \in \Sigma_n$ may contain other variables too. 

The least number principle $\mathrm{L}\Gamma$ for a class of formulas is the collection $\exists x\ \varphi (x) \to \exists x\ (\varphi (x) \wedge \forall \, y{<}x\ \neg \varphi (y))$ for $\varphi \in \Gamma$.

\subsection{Numberized theories}

The notion of interpretability applies to any pair of theories and not necessarily need they contain any arithmetic. However, in this paper we will prove that $U\rhd V$ can in various occasions be equivalent to other properties that are stated in terms of numbers. For example, in certain situations we have that $U\rhd V$ is equivalent to $U$ proving all the $\Pi^0_1$ formulas that $V$ does. Clearly, in this situation we should understand that $U$ and $V$ come with a natural interpretation of numbers.

\begin{definition}
We will call a pair $\tupel{U,k}$\/  a {\em \Index{numberized theory}}\/ 
if 
$k:U\rhd {\sf S}^1_2$.
A theory $U$ is {\em numberizable} or {\em arithmetical} if for some $j$, 
\pair{U,j} is a numberized theory.
\end{definition}

From now on, we shall only consider numberizable or numberized theories.
Often however, we will fix a numberization $j$ and reason about the
theory \pair{U,j} as if it were formulated in the language of arithmetic.

A disadvantage of doing so is clearly that our statements may
be somehow misleading; when we think of, e.g.,  \zfc we do not 
like to think of it as coming with a fixed numberization. However, for the kind of characterizations treated in this paper, it is really needed to have numbers around. We shall most of the times work with 
\index{sequential theory} sequential theories. Basically, sequentiality means 
that any finite sequence of objects can be coded.

\subsection{Metamathematics in numberized theories}
On many occasions, we want to represent numbers by terms (numerals) and 
then consider the code of that term. It is  not a good idea to represent
a number $n$ by 
\[
\overbrace{\suc \ldots \suc}^{n \mbox{ times}}0.
\]
For, the
length of this object is $n+1$ whence its code is about $2^{n+1}$ and
we would like to avoid the use of exponentiation. In the setting of weaker arithmetics
it is common practice to use so-called \emph{\Index{efficient numerals}}. These numerals
are defined by recursion as follows. $\numeral{0}=0$; 
$\numeral{2{\cdot} n}= (\suc \suc 0) \cdot \numeral{n}$ and 
$\numeral{2{\cdot}n +1}= \suc ((\suc \suc 0) \cdot \numeral{n})$. Clearly, these numerals implement the system of dyadic notation which perfectly ties up with the PIND principle. Often we shall refrain between distinguishing $n$ from its numeral $\overline n$ or even the G\"odel number $\ulcorner \overline n \urcorner$ of its numeral.

As we want to do arithmetization of syntax, our theories should be coded in 
a simple way. We will assume that all our theories $U$ have an
axiom set that is decidable in polynomial time.
%
%
That is, there is some formula \axioms{U}{x} which is
$\Delta_1^b$ (both the formula and its negation are provably equivalent to a $\Sigma^b_1$ formula) in \sonetwo, with 
\begin{enumerate}
\item[]
$\sonetwo \vdash \axioms{U}{\varphi}$ iff $\varphi$ is an axiom of $U$.
%
\end{enumerate}
The choice of $\Delta_1^b$-axiomatizations is also
motivated by Lemma \ref{lemm:boundedsigmacompleteness} below. Most natural theories like \zfc or \pa indeed have $\Delta_1^b$-axiomatizations. Moreover, by a sharpening of Craig's trick, any recursive theory is deductively equivalent to one with a $\Delta_1^b$-axiomatization.

We shall employ the standard techniques and concepts necessary for the arithmetization of syntax. Thus, we shall work with provability predicates $\Box_U$ corresponding uniformly to arithmetical theories $U$. We shall adhere to the standard dot notation so that, for example, $\Box_U \varphi (\dot x)$ denotes a formula with one free variable $x$ so that for each value of $x$, $\Box_U \varphi (\dot x)$ is provably equivalent to $\Box_U \varphi (\overline x)$.

We shall always write the formalized version of
a concept in sans-serif style. For example, \bewijs{U}{p,\varphi} stands for the
formalization of ``$p$ is a $U$-proof of $\varphi$'', \cons{U} stands for 
the formalization of ``$U$ is a consistent theory'' and so forth. It is known that for theories $U$ with a poly-time axiom set, the formula \bewijs{U}{p,\varphi} can be taken to be in $\Delta^b_1$ being a poly-time decidable predicate. Again, \cite{BusH2} and \cite{HP} are adequate references.

For already really weak theories $T$ we have $\Sigma_1$-completeness in the sense that $T$ proves any true $\Sigma_1$ sentence.
However, proofs of $\Sigma_1$-sentences $\sigma$ are multi-exponentially
big, that is, $2^{\sigma}_n$ for some $n$ depending on $\sigma$. 
(See e.g., \ \cite{HP}.) As such, we cannot expect that we can formalize the $\Sigma_1$ completeness theorem in theories where exponentiation is not necessarily total.

However, for $\exists \Sigma_1^b$-formulas we do have a completeness 
theorem (see \cite{BusH2}) in bounded arithmetic.
\label{symb:forallform}
From now on, we shall often write a sup-index to a quantifier to
specify the domain of quantification.

\begin{lemma}\label{lemm:boundedsigmacompleteness}
If $\alpha (x)\in \exists \Sigma_1^b$, then there 
is some standard natural number $n$ such that
\[
\sonetwo \vdash \forall x\ 
[\alpha (x) \rightarrow \exists \, p{<}\omeja{x}{n} \ \bewijs{U}{p, \alpha (\dot{x})}] .
\]
This holds for any reasonable arithmetical theory $U$.
Moreover, we have also a formalized version of this statement.
\[
\sonetwo \vdash \forall^{\exists \Sigma_1^b} \alpha \, \exists n\  
\Box_{\sonetwo}( \forall x\ 
[\dot{\alpha} (x) \rightarrow \exists \, p{<}\omeja{x}{\dot n} \ \bewijs{U}{p, \dot \alpha (\dot{x})}]) .
\]
\end{lemma}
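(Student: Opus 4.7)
The plan is to prove both statements simultaneously by induction on the construction of $\alpha$ according to the formation rules for $\exists\Sigma_1^b$: atomic formulas, boolean combinations, sharply bounded universal quantifiers, bounded existential quantifiers, and finally a single outer unbounded existential. At each stage I maintain an invariant of the form: there is a standard $n_\alpha$ such that $\sonetwo$ proves $\forall x\,[\alpha(x)\to\exists\, p{<}\omeja{x}{n_\alpha}\ \bewijs{U}{p,\alpha(\dot x)}]$. The second, formalized statement will then be read off from the fact that the whole induction is a uniform, syntactic construction.

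For the base case with atomic $\alpha(x)$, I would construct explicit $U$-proofs of true atomic instances $\alpha(\overline x)$ directly from the defining axioms for $0,S,+,\cdot,\sharp,|\cdot|,\lfloor\cdot/2\rfloor,\leq$ applied to efficient numerals. The growth estimates in Section~2.1, together with the poly-time evaluation of terms on efficient numerals, yield a proof of size polynomial in $x$, hence below $\omeja{x}{n_0}$ for some fixed $n_0$; this size estimate is itself verifiable in $\sonetwo$ via $\Sigma_1^b$-induction on term structure.

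For the inductive steps, conjunction splices two IH-proofs together with one rule application; disjunction first decides which disjunct is true (the decision is $\Sigma_1^b$, hence available in $\sonetwo$) and invokes the IH on it; sharply bounded universals $\forall\, y{<}|t(x)|\,\beta$ are handled by forming the $|t(x)|$-fold conjunction of IH-proofs for each $y$, whose total size is absorbed into $\omeja{x}{n+1}$ because $|t(x)|$ is polylogarithmic in $x$; bounded existentials $\exists\, y{<}t(x)\,\beta$ use $\sonetwo$'s bounded minimisation to select a witness $y_0<t(x)$, substitute the efficient numeral $\overline{y_0}$ into $\beta$, apply the IH, and prepend one $\exists$-introduction step, where the substitution blow-up from Section~2.1 item~(6) costs at most one extra layer of $\omega_1$. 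The outer unbounded existential is handled identically to the bounded case, with the witness furnished by the assumed truth of $\alpha$.

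The formalized statement is obtained by reifying the entire recursion within $\sonetwo$: given $\alpha$ as input, one defines the standard constant $n_\alpha$ and an associated proof template by LIND on $|\alpha|$, yielding a $\Delta_1^b$-definable function $\alpha\mapsto(n_\alpha,\pi_\alpha)$ together with a $\sonetwo$-proof that $\pi_\alpha$ witnesses the displayed formula inside $\Box_{\sonetwo}$. The main obstacle will be the exponent bookkeeping: one must verify at each inductive step that $n_\alpha$ grows by at most a standard constant, that substitutions involving efficient numerals cost at most one extra layer of $\omega_1$, and --- for the formalized version --- that the proof-constructing function is poly-time definable and its correctness is established via $\Sigma_1^b$-PIND on formula complexity inside $\sonetwo$, so that the meta-induction producing $n_\alpha$ can be internalized without escaping the base theory.
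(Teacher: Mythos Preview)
The paper does not supply its own proof of this lemma: the sentence immediately preceding it attributes the result to Buss (the reference to \cite{BusH2}), and no proof environment follows the statement. Your induction along the formation rules of $\exists\Sigma_1^b$---atomic, boolean, sharply bounded universal, bounded existential, then a leading unbounded existential---is exactly the standard argument from that source, so you are reconstructing what the paper delegates to the literature rather than departing from it.

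One caution on your final clause. For a genuinely unbounded leading existential $\alpha(x)=\exists y\,\beta(x,y)$, the proof you manufacture for $\alpha(\overline x)$ passes through $\beta(\overline x,\overline{y_0})$ for a witness $y_0$ that is not bounded in terms of $x$, so the size of that proof depends on $y_0$ as well. The displayed bound $\omeja{x}{n}$, read literally as a function of $x$ alone, does not absorb this; the usual formulation bounds the proof by a fixed $\omega_1$-iterate of \emph{all} the numerical data present (free variables together with the existential witness). This is a wrinkle in how the lemma is phrased rather than a defect in your strategy, but your remark that the unbounded case is ``handled identically to the bounded case'' glosses over exactly the place where the exponent bookkeeping you announce would otherwise fail to close.
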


\subsection{\index{reflexive theory} Consistency and reflexive theories}

Since G\"odel's second incompleteness theorem, we know that no recursive theory that is consistent can prove its own consistency. For a large class of natural theories we do have a good approximation of proving consistency though. A theory is \emph{reflexive} if it proves the consistency of all of its finite subtheories. Reflexivity is a natural notion and most natural non-finitely axiomatized theories are reflexive like, for example, primitive recursive arithmetic and \pa.

Many meta-mathematical statements involve the notion of 
reflexivity. There exist various ways in which reflexivity can be formalized, and 
throughout the literature we can find many different 
formalizations. For stronger theories, all these 
formalizations coincide. But for weaker theories, the differences are essential.
We give some formalizations of reflexivity.

\begin{enumerate}
\item
$\forall n\ U \vdash \cons{\finapr{U}{n}}$
where \label{symb:finapr} \finapr{U}{n} denotes the conjunction of the first $n$ axioms 
of $U$.

\item
$\forall n\ U \vdash \cons{{U}{\upharpoonright}{n}}$ where 
\label{symb:restrictedtheory}$\cons{{U}{\upharpoonright}{n}}$
denotes that there is no proof of
falsity using only axioms of $U$ with G\"odel numbers $\leq n$.

\item
$\forall n\ U \vdash \restcons{U}{n}$ where \label{symb:restcons}\restcons{U}{n} 
denotes
that there is no proof of falsity with a proof $p$ where 
$p$ has the following properties. All non-logical axioms of $U$ that
occur in $p$ have G\"odel numbers $\leq n$. All formulas
$\varphi$ that occur in $p$ have a logical complexity 
\label{symb:rho}$\rho (\varphi) \leq n$.\\
Here $\rho$ is some 
complexity measure that basically counts the number 
of quantifier alternations in $\varphi$. Important features of this
$\rho$ are that for every $n$, there are truth predicates
for formulas with complexity $n$. Moreover, the $\rho$-measure of a formula
should be more or less (modulo some poly-time difference, 
see Remark \ref{rema:translation}) preserved under
translations. An example of such a $\rho$ is given in \cite{viss:unpr93}.
\end{enumerate}

It is clear that $(2) \Rightarrow (3)$ can be proven in any weak base theory. For the
corresponding provability notions, the implication reverses. In this
paper, our notion of reflexivity shall be the third one.

We shall write $\Box_{U,n} \varphi$ for 
$\neg \restcons{U+\neg \varphi}{n}$ or, equivalently, 
$\exists p\ \bewijs{U,n}{p,\varphi}$. Here,  
$\bewijs{U,n}{p,\varphi}$ denotes that $p$ is a 
$U$-proof of $\varphi$ with all axioms in $p$ are $\leq n$ and for
all formulas $\psi$ that occur in $p$, we have $\rho(\psi)\leq n$.

\begin{remark}\label{rema:sigma}
An inspection of the proof of provable 
$\Sigma_1$-completeness (Lemma \ref{lemm:boundedsigmacompleteness})
gives us some more information. The proof $p$ that witnesses the 
provability in $U$ of some $\exists \Sigma^b_1$-sentence 
$\alpha$, can easily be taken so that
all axioms occurring in $p$ are about as big and complex as $\alpha$.
Thus, from $\alpha$, we get for some $n$ (depending linearly 
on $\alpha$) that \bewijs{U,n}{p,\alpha}.
\end{remark}

If we wish to emphasize the fact that our theories are not necessarily in 
the language of arithmetic, but just can be numberized, our formulations
of reflexivity should be slightly changed. For example, 
$(3)$ will for some \pair{U,j} look like 
$j: U \rhd \sonetwo + \verz{\restcons{U}{n} \mid n\in \omega}$.

If $U$ is a reflexive theory, we do not necessarily have any 
reflection principles.
That is, we do not have
$U \vdash  \Box_V \varphi \rightarrow \varphi$ for some natural $V\subset U$ and
for some natural class of formulae $\varphi$.
We do have, however, a weak form of $\forall \Pi_1^b$-reflection.
This is expressed in the following lemma.

\begin{lemma}\label{lemm:weakpireflexion}
Let $U$ be a reflexive theory. Then
\[
\sonetwo \vdash \forall^{\forall \Pi_1^b} \pi \, \forall n \ \Box_U 
\forall x \ (\Box_{\restrict{U}{n}}\pi (\dot{x}) \rightarrow \pi (x)) .
\]
\end{lemma}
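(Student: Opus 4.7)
The plan is to reduce to formalised $\exists\Sigma_1^b$-completeness (Remark~\ref{rema:sigma}) combined with a uniform internal formulation of reflexivity. Fix $\pi(x)\in\forall\Pi_1^b$, so $\neg\pi(x)\in\exists\Sigma_1^b$. By Lemma~\ref{lemm:boundedsigmacompleteness} together with Remark~\ref{rema:sigma} there is a \emph{standard} number $N$, depending on $\pi$ only, such that $\sonetwo$ proves
\[
\Box_U\bigl(\forall x\,[\neg\pi(x)\to \Box_{U,N}\neg\pi(\dot x)]\bigr).
\]
The point that $N$ can be taken independent of $x$ is essential: substitution of a numeral does not alter the $\rho$-complexity of a formula, and the defining axioms invoked in the $\exists\Sigma_1^b$-completeness argument are the standard ones, so their G\"odel numbers stay bounded by a constant determined by $\pi$, even though the proof-size witness $\omeja{x}{N}$ itself grows with $x$.

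Now fix an arbitrary $n$ in $\sonetwo$ and set $k:=\max(n,N)+c$ for a standard constant $c$ that bounds the syntactic cost of a single modus ponens step at fixed $\rho$-complexity. Reasoning inside $U$: fix $x$, assume $\Box_{U,n}\pi(\dot x)$, and suppose for contradiction $\neg\pi(x)$. The previous display gives $\Box_{U,N}\neg\pi(\dot x)$; combining the two restricted proofs by modus ponens produces a restricted $U$-proof of falsity, i.e.\ $\neg\restcons{U}{k}$. But reflexivity of $U$, formalised uniformly --- which is available because for any natural $\Delta_1^b$-axiomatised $U$ the map $k\mapsto\ $``canonical $U$-proof of $\restcons{U}{\overline k}$'' is primitive recursive, and hence verifiable in $\sonetwo$ --- gives $\sonetwo\vdash\forall k\,\Box_U\restcons{U}{\dot k}$. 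Inside $U$ we therefore do have $\restcons{U}{k}$, contradicting the previous step. So $\pi(x)$, discharging the assumption.

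The entire $U$-proof is uniform in $\pi$ and $n$: $N$ is read off from $\pi$ by unpacking the $\exists\Sigma_1^b$-completeness construction, and $k$ is a $\sonetwo$-term in $n$. The main obstacle, as I see it, is justifying that $N$ can be chosen independent of $x$ in the first display; a secondary concern is that mere external reflexivity ``$\forall n\in\omega\ U\vdash\restcons{U}{n}$'' does not immediately yield the internal version needed, so one must appeal to the uniform formalisation appropriate to a $\Delta_1^b$-axiomatised $U$ --- spelling this out is routine but essential, since the $k$ that appears in the argument depends on the $\sonetwo$-variable $n$ rather than on a standard numeral.
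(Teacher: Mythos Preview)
Your proof is correct and is essentially the paper's argument: obtain $N$ (the paper's $m$) from formalised $\exists\Sigma_1^b$-completeness so that $\Box_U\forall x\,(\neg\pi(x)\to\Box_{U,N}\neg\pi(\dot x))$, set $k=\max(n,N)$, and inside $U$ derive $\Box_{U,k}\bot$ from $\neg\pi(x)$, contradicting reflexivity. Two small remarks: since $\pi$ is quantified \emph{inside} $\sonetwo$, your $N$ need not be a \emph{standard} number but merely an $\sonetwo$-number computed from $\pi$ (and the extra ``$+c$'' is harmless but unnecessary, as the paper's $k=\max\{n,m\}$ already suffices); your explicit worry about needing \emph{uniform} reflexivity, i.e.\ $\sonetwo\vdash\forall k\,\Box_U\restcons{U}{\dot k}$, is well taken and is a point the paper glosses over with the phrase ``this contradicts the reflexivity''.
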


\begin{proof}
Reason in 
\sonetwo
and fix $\pi$ and $n$. Let $m$ be such
that we have 
(see Lemma
\ref{lemm:boundedsigmacompleteness} and
Remark \ref{rema:sigma})
\[
\Box_U \forall x \ (\neg \pi (x) \rightarrow \Box_{\restrict{U}{m}}\neg
\pi (\dot{x})) .
\]
Furthermore, let $k \eqbydef \max \{n,m\}$. Now, reason in $U$, fix some 
$x$ and assume $\Box_{\restrict{U}{n}}\pi (x)$. Thus, clearly also
$\Box_{\restrict{U}{k}}\pi (x)$. If now $\neg \pi (x)$, then also 
$\Box_{\restrict{U}{k}}\neg \pi (x)$, whence $\Box_{\restrict{U}{k}}\bot$.
This contradicts the reflexivity, whence $\pi (x)$. As $x$ was arbitrary we 
get $\forall x \ (\Box_{\restrict{U}{n}}\pi (x) \rightarrow \pi (x))$.
\end{proof}
We note that this lemma also holds for the other notions of restricted 
provability we introduced in this subsection.

\section{Formalized interpretability}

As we already mentioned, our notion of \inty is 
the one
studied by Tarski et al in \cite{tars:unde53}. In that notion, any axiom needs to be provable after translation. Under some fairly weak conditions this implies that also theorems are translated to theorems. However, in the domain of bounded arithmetics we do not generally have this. In the realm of formalized interpretation therefore, there has been a tendency to consider a small adaptation of the original notion Tarski. This adaptation as introduced by Visser is called \emph{smooth} interpretability. In this subsection we shall exactly define this notion and see how it relates to other notions of formalized interpretability. In various ways, one can hold that theorems interpretability as discussed below is actually the more natural formalized version of interpretability.

The theories that we study in this paper are theories formulated
in first order predicate logic. All theories have a finite signature 
that contains identity. For simplicity we shall assume that all our
theories are formulated in a purely relational way.
Here is the formal definition of a relative interpretation.

\begin{defi}\label{defi:interpretation}
A \emph{translation} $k$  of the language of a theory $S$ into the language of a theory $T$
is a pair $\langle \delta, F \rangle$ for which the 
following holds. 

The first component $\delta$, is called the \emph{domain specifier} and is a formula in the
language of $T$ with a single free variable. This formula is used to specify the 
domain of our interpretation.

The second component, $F$, is a finite map that sends relation symbols $R$ (including identity) from the language of $S$, to formulas $F (R )$ in the language of $T$. We demand for all $R$ that the number of free variables of $F (R )$ equals the arity of $R$.\footnote{Formally,
we should be more precise and specify our variables.} 
Recursively we define the \Index{translation} \trans{\varphi}{k}
\label{symb:transfi}
of a formula 
$\varphi$ in the language of $S$ as follows.
\begin{itemize}
\item
$\trans{(R (\vec{x}))}{k} = F(R)(\vec{x})$;

\item
$\trans{(\varphi \wedge \psi)}{k}= \trans{\varphi}{k} \wedge 
\trans{\psi}{k}$ and likewise for other boolean connectives;\\ 
(in particular, this implies $\trans{\bot}{k}=\bot$);

\item
$\trans{(\forall x\ \varphi (x))}{k}= \forall x\ (\delta (x) 
\rightarrow \trans{\varphi}{k})$ and analogously for the existential
quantifier.
\end{itemize}
A relative interpretation $k$  of a theory $S$ into a theory $T$
is a translation $\langle \delta, F \rangle$ so that $T \vdash \trans{\varphi}{k}$ for all axioms $\varphi$ of $S$.
\end{defi}

To formalize insights about interpretability in weak meta-theories like ${\sf S}^1_2$
we need to be very careful. Definitions of interpretability that are unproblematically
equivalent in a strong theory like, say, $\isig{1}$ diverge in weak theories. 
As we shall see, the major
source of problems is the absence of $\collection{\Sigma_1}$.

In this subsection, we 
study various divergent definitions of interpretability. We start by making an elementary observation on interpretations. Basically,
the next definition and lemma say that translations 
transform proofs into translated proofs.

\begin{definition}\label{defi:prooftransl}
Let $k$ be a \Index{translation}.
By recursion on a proof $p$ in natural deduction we define 
the translation of $p$ under $k$, we write
\label{symb:prooftrans}
\translated{p}{k}. 
For this purpose, we first define \transl{\varphi}{k} for  
formulae $\varphi$ to be\footnote{To be really precise we should
say that, for example, we let smaller $x_i$ come first in 
$\bigwedge_{x_i\in \freevar{\varphi}}\delta (x_i)$.} 
$\bigwedge_{x_i\in \freevar{\varphi}}\delta (x_i) \rightarrow \translated{\varphi}{k}$.
Here \freevar{\varphi} denotes the set of free variables of $\varphi$.
Clearly, this set cannot contain more than $|\varphi|$ elements, whence
\transl{\varphi}{k} will not be too big. Obviously, for sentences
$\varphi$, we have $\transl{\varphi}{k}=\translated{\varphi}{k}$.

If $p$ is just a single assumption $\varphi$, then 
\trans{p}{k} is \transl{\varphi}{k}. The translation 
of the proof constructions are defined precisely in such a way that
we can prove Lemma \ref{lemm:translatedproofs} below. For example, the translation of
$$
\infer{\varphi\wedge\psi}{\varphi & \psi}
$$ 
will be
$$
\infer[\rightarrow I, 1]{\bigwedge_{x_i\in \freevar{\varphi \wedge \psi}}\delta (x_i)
\rightarrow \translated{\varphi}{k} \wedge \translated{\psi}{k}}
{
\infer{\translated{\varphi}{k} \wedge \translated{\psi}{k}}
{\infer{\translated{\varphi}{k}}
{
\infer{\bigwedge_{x_i\in \freevar{\varphi}}\delta (x_i)}
{[\bigwedge_{x_i\in \freevar{\varphi \wedge \psi}}\delta (x_i)]_1} &
\bigwedge_{x_i\in \freevar{\varphi}}\delta (x_i) \rightarrow \translated{\varphi}{k}
} 
&
\infer{\translated{\psi}{k}}
{\mathcal{D}}
}
}
$$
%
%
where $\mathcal{D}$ is just a symmetric copy of the part above \translated{\varphi}{k}.
We note that the translation of the proof constructions
is available\footnote{More efficient translations on proofs are
also available. However they are less uniform.} 
in \sonetwo, as the number of free variables in 
$\varphi \wedge \psi$ is bounded by $|\varphi \wedge \psi|$.
\end{definition}

\begin{lemma}\label{lemm:translatedproofs}
If $p$ is a proof of a sentence $\varphi$ with assumptions in some set of sentences
$\Gamma$, then for any translation $k$, \translated{p}{k} is a proof
of \translated{\varphi}{k} with assumptions in \translated{\Gamma}{k}.
\end{lemma}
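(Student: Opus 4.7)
The plan is to prove the lemma by induction on the structure of the natural-deduction proof $p$. Since subproofs typically involve formulas with free variables even when the overall conclusion $\varphi$ is a sentence, it is convenient to establish the slightly stronger statement that for any subproof $q$ of a (possibly open) formula $\psi$ from open assumptions $\Gamma'$, the translation $\trans{q}{k}$ is a proof of $\transl{\psi}{k}$ from assumptions $\{\transl{\gamma}{k} : \gamma \in \Gamma'\}$. When $\psi$ and all $\gamma \in \Gamma'$ are sentences, $\transl{\cdot}{k}$ coincides with $\trans{\cdot}{k}$, and the lemma as stated follows. The base case, where $q$ is a single assumption $\psi$, is immediate: by Definition~\ref{defi:prooftransl}, $\trans{q}{k}$ is set to $\transl{\psi}{k}$.

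For the inductive step, one checks each natural-deduction rule in turn. The propositional rules ($\wedge$-I/E, $\vee$-I/E, $\to$-I/E, $\neg$-I/E, $\bot$-E) all follow the template illustrated in Definition~\ref{defi:prooftransl} for $\wedge$-introduction: one applies the inductive hypothesis to each premise subproof, combines the translated conclusions via the same rule, and wraps the result in an outer $\to$-introduction that absorbs the antecedent $\bigwedge_{x_i \in \freevar{\psi}} \delta(x_i)$. Some routine propositional manipulation is required to split this antecedent and dispatch the appropriate subset of $\delta$-guards to each premise whose free-variable set is smaller. The identity rules need nothing special beyond this pattern, since identity is just another relation symbol in the domain of~$F$.

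The quantifier rules are the most delicate. For $\forall$-introduction, from a proof of $\psi(y)$ with $y$ fresh we derive $\forall x\,\psi(x)$; the inductive hypothesis yields a translated subproof of $\bigwedge_{x_i \in \freevar{\psi(y)}} \delta(x_i) \to \trans{\psi(y)}{k}$, from which a propositional rearrangement of antecedents followed by $\forall$-introduction on $y$ (whose freshness survives because the translation commutes with renaming) delivers exactly $\transl{(\forall x\,\psi(x))}{k}$. The cases $\forall$-E, $\exists$-I and $\exists$-E are analogous, with $\exists$-elimination requiring the most care in transferring the eigenvariable condition through the domain-guard. The main obstacle throughout is precisely this administrative bookkeeping, making sure the $\delta$-guards are introduced and discharged at the intended positions. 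For the intended use inside \sonetwo, one should additionally verify that the operation $p \mapsto \trans{p}{k}$ is polynomial-time and that $|\trans{p}{k}|$ stays polynomially bounded in $|p|$ and the size of $k$; both follow from the coding estimates in Section~\ref{subs:coding} together with the observation that each local proof-translation step introduces at most $O(|\psi| \cdot |\delta|)$ new symbols.
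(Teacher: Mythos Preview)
Your proposal is correct and follows essentially the same approach as the paper: induction on the natural-deduction proof $p$, using the auxiliary notion $\transl{\varphi}{k}$ from Definition~\ref{defi:prooftransl} to handle the intermediate formulas with free variables that arise in subproofs. The paper's own proof is much terser---it merely points out that the restriction to sentences is necessary (via a counterexample with $\forall$-elimination) and then states that the induction goes through precisely because $\transl{\cdot}{k}$ was tailored for it---whereas you spell out the rule-by-rule bookkeeping and add the polynomial-size remarks, but the underlying strategy is identical.
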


\begin{proof}
Note that the restriction on sentences is needed. 
For example
$$
\infer{\psi (x)}{\forall x \  \varphi (x) & \forall x\ (\varphi (x) \rightarrow 
\psi (x))}
$$
but
$$
\infer{\delta (x) \rightarrow \psi^k (x)}
{(\forall x \  \varphi (x))^k & (\forall x\ (\varphi (x) \rightarrow 
\psi (x)))^k}
$$
and in general $\nvdash (\delta (x) \rightarrow \translated{\psi}{k})\leftrightarrow 
\translated{\psi}{k}$. The lemma is proved by induction on $p$. To account for 
formulas in the induction, 
we use the notion \transl{\varphi}{k} from Definition \ref{defi:prooftransl},
which
is tailored precisely to let the induction go through.
\end{proof}

\begin{remark}\label{rema:translation}
The proof translation leaves all the structure invariant. Thus,
there is a provably total (in \sonetwo) function $f$ such that,
if $p$ is a $U,n$-proof of $\varphi$, then $\trans{p}{k}$
is a proof of $\trans{\varphi}{k}$, where $\trans{p}{k}$ has the 
following properties. All axioms in \trans{p}{k} are 
$\leq f(n,k)$ and all formulas $\psi$ in \trans{p}{k}
have $\rho(\psi)\leq f(n,k)$.
\end{remark}


\begin{figure}
\begin{center}
\input{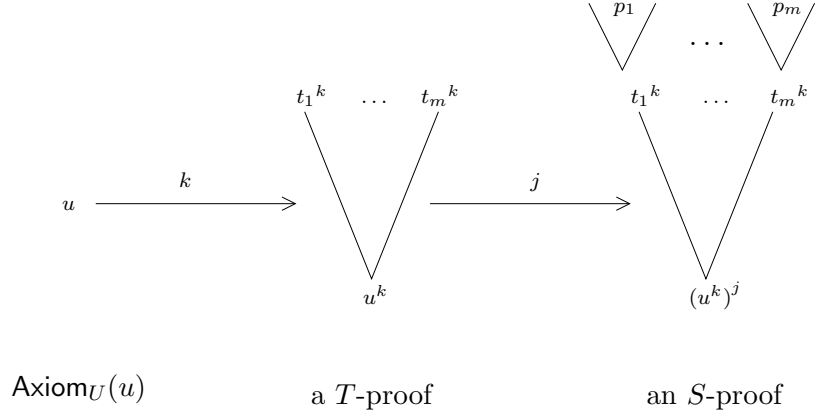}
\end{center}
\caption{Transitivity of interpretability}\label{pict:trans}
\end{figure}

There are various reasons to give, why we want the notion of interpretability
to be provably transitive, that is, provably $S\rhd U$ whenever both $S \rhd T$ and $T\rhd U$.
The obvious way of proving this would be by composing (doing the one after the other)
two interpretations. 
Thus, if we have
$j: S \rhd T$ and $k: T\rhd U$ we would like to have
$j\circ k : S\rhd U$ where $j\circ k$ denotes a natural composition of translations.

If we try to perform a proof as depicted in 
Figure \ref{pict:trans}, at
a certain point we would like to collect the $S$-proofs $p_1,\cdots ,p_m$
of the $j$-translated 
$T$-axioms used in a proof of a $k$-translation of an axiom $u$ of $U$, and take 
the maximum of all such proofs. But to see that such a maximum exists, 
we precisely need $\Sigma_1$-collection.

However, it is desirable to also reason about \inty in 
the absence of \collection{\Sigma_1}. A trick is needed to circumvent 
the problem of the unprovability of transitivity (and many other 
elementary desiderata).

One way to solve the problem is by switching to a notion
of \inty where the needed collection has been built in. This is
the notion of smooth (axioms) \inty as in Definition \ref{defi:intynotions}.
In this paper we shall mean by \inty, unless mentioned otherwise, always
smooth \inty.
In the presence of \collection{\Sigma_1} this notion will coincide with
the earlier defined notion of \inty, as Theorem \ref{theo:intynotions} 
tells us.

%
%
%
%

\begin{definition}\label{defi:intynotions}
We define the notions of \index{interpretability!axioms}
\index{interpretability!smooth axioms}\index{interpretability!theorems}axioms
\index{interpretability!smooth theorems}interpretability 
$\rhd_a$, theorems interpretability 
$\rhd_t$, 
smooth axioms interpretability $\rhd_{sa}$ and
smooth theorems \inty $\rhd_{st}$. 
\[
\begin{array}{lll}
j : U\rhd_a V & \eqbydef & \forall v\, \exists p \ 
(\axioms{V}{v} \rightarrow \bewijs{U}{p,\translated{v}{j}}) \\
j : U\rhd_t V & \eqbydef & \forall \varphi \, \forall p \, \exists p'\ 
(\bewijs{V}{p,\varphi} \rightarrow \bewijs{U}{p',\translated{\varphi}{j}})\\
j : U\rhd_{sa} V & \eqbydef & 
\forall x \, \exists y\, \forall \, v{\leq} x \, \exists\, p{\leq}y\ 
(\axioms{V}{v}\rightarrow \bewijs{U}{p,\translated{v}{j}})\\
j : U\rhd_{st} V & \eqbydef & 
\forall  x\, \exists y\, \forall \, \varphi {\leq} x\, \forall \, p{\leq}x \, \exists\, p'{\leq}y\ 
(\bewijs{V}{p,\varphi} \rightarrow \bewijs{U}{p',\translated{\varphi}{j}})\\
\end{array} 
\]
\end{definition}

It is now easy to see that $\rhd_a$ is indeed provably transitive over very weak base theories. For $\rhd_t$ this follows almost directly from the definition.

\begin{figure}
\begin{center}
\input{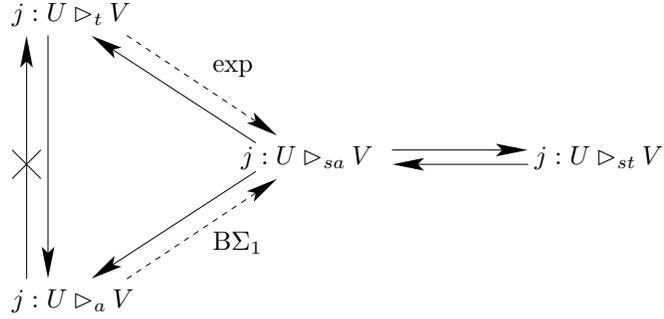}
\end{center}
\caption{Versions of relative interpretability. The dotted arrows indicate that an additional condition is needed in our proof; the condition written next to it. The arrow with a cross through it,
indicates that we know that the implication fails in \sonetwo.}\label{pict:smooth}
\end{figure}

\begin{theorem}\label{theo:intynotions}
In \sonetwo we have all the arrows as depicted in Figure \ref{pict:smooth}. 
\end{theorem}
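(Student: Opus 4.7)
My plan is to verify each arrow in Figure \ref{pict:smooth} separately, grouping them by proof strategy. The four solid arrows require no side hypothesis. The ``dropping smoothness'' arrows $\rhd_{st} \Rightarrow \rhd_t$ and $\rhd_{sa} \Rightarrow \rhd_a$ are immediate: given particular $\varphi$ and $p$ (or $v$), one sets $x$ to be their maximum in the smooth statement, extracts the uniform bound $y$, and discards it. The ``axioms are theorems'' arrows $\rhd_t \Rightarrow \rhd_a$ and $\rhd_{st} \Rightarrow \rhd_{sa}$ follow from the observation that any axiom $v$ is witnessed by a trivial one-step $V$-proof consisting of $v$ as both assumption and conclusion, with polynomial size in $|v|$; instantiating the theorem-level statement at this trivial proof yields the axiom-level statement. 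All four arguments go through cleanly in \sonetwo.

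For the dotted arrow $\rhd_a \Rightarrow \rhd_{sa}$ requiring $\collection{\Sigma_1}$, I would observe that the body $\axioms{V}{v} \to \exists p\, \bewijs{U}{p, v^j}$ is $\Sigma_1$, since $\axioms{V}{v}$ is $\Delta_1^b$ by the standing assumption and $\bewijs{U}{p, v^j}$ is $\Delta_1^b$. Fix $x$; applying $\collection{\Sigma_1}$ to the $\Sigma_1$ formula $\exists p\, \bewijs{U}{p, v^j}$ with the bounded universal quantifier $\forall v \leq x$ restricted by $\axioms{V}{v}$ produces precisely the uniform bound $y$ required by $\rhd_{sa}$ for inputs $\leq x$.

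For the dotted arrow $\rhd_t \Rightarrow \rhd_{st}$ requiring $\exp$, I would pursue a structurally similar argument, applying collection to the $\Sigma_1$ formula $\bewijs{V}{p, \varphi} \to \exists p'\, \bewijs{U}{p', \translated{\varphi}{j}}$; the role of $\exp$ is to boost \sonetwo to a fragment in which the relevant collection is available. An alternative and more direct route uses the proof-translation of Lemma \ref{lemm:translatedproofs}: given a $V$-proof $p$ with $p, \varphi \leq x$, one forms $\translated{p}{j}$, which proves $\translated{\varphi}{j}$ modulo translated $V$-axioms, and discharges each translated axiom by plugging in a $U$-proof guaranteed by the axiom version of $\rhd_t$ applied to the trivial proof of that axiom. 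The size of the composed $U$-proof is bounded by an elementary function of $x$, and $\exp$ guarantees that such a uniform $y$ exists.

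The main obstacle is the $\exp$ case, where one must carefully track how proof sizes compose through translation and axiom-substitution, since the total size can grow exponentially in $x$ once every translated $V$-axiom is replaced by its $U$-witness. A secondary point, already flagged by the cross-arrow in Figure \ref{pict:smooth}, is that one of the missing implications genuinely fails in bare \sonetwo, so the side hypotheses $\collection{\Sigma_1}$ and $\exp$ are essential rather than merely convenient, and this should be acknowledged as the reason the dotted arrows are not drawn solid.
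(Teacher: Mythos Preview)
Your treatment of the trivial solid arrows and of $\rhd_a \Rightarrow \rhd_{sa}$ under $\collection{\Sigma_1}$ matches the paper. However, you have misread the diagram in two linked ways, and this leads to a genuine gap.

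First, the dotted arrow labelled $\exp$ in Figure~\ref{pict:smooth} goes from $\rhd_t$ to $\rhd_{sa}$, not to $\rhd_{st}$; and there is an additional \emph{solid} arrow $\rhd_{sa} \Rightarrow \rhd_{st}$, provable in \sonetwo with no side hypothesis, which you do not list at all. Your ``alternative route'' via proof translation and axiom substitution is in fact exactly the paper's argument for this missing arrow $\rhd_{sa} \Rightarrow \rhd_{st}$: given $p\leq x$, form $p^j$, use the smoothness bound $y_0$ to uniformly replace each translated axiom by a $U$-proof $\leq y_0$, and estimate the result as roughly $(x^j)^{|y_0|}$, which exists in \sonetwo. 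The point is that this substitution argument \emph{requires} the uniform bound $y_0$, i.e.\ smoothness, to go through.

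Second, both of your proposed arguments for the $\exp$ arrow fail. The claim that $\exp$ ``boosts \sonetwo to a fragment in which the relevant collection is available'' is incorrect: $\exp$ does not yield $\collection{\Sigma_1}$. And your alternative route, applied to $\rhd_t$ rather than $\rhd_{sa}$, breaks precisely because $\rhd_t$ gives for each axiom $v_i\leq x$ only \emph{some} $U$-proof of $v_i^j$, with no uniform bound; without collection you cannot conclude that the composed proof is bounded by any function of $x$. The paper's actual trick for $\rhd_t \Rightarrow \rhd_{sa}$ under $\exp$ avoids this entirely: form the single conjunction $\nu := \bigwedge_{v_i\leq x,\ \axioms{V}{v_i}} v_i$, whose code is $\mathcal{O}(x^x)$ and hence exists by $\exp$; take its trivial $V$-proof; apply $\rhd_t$ \emph{once} to obtain a $U$-proof $p'$ of $\nu^j$; then uniformly extract from $p'$ a proof of each $v_i^j$. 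This single application of $\rhd_t$ sidesteps the need for collection.
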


\begin{proof}
We shall only comment on the arrows that are not completely trivial.\\

$\bullet$ $T\vdash j: U\rhd_aV \rightarrow j:U\rhd_{sa}V$, if $T\vdash \collection{\Sigma_1}$.
So, reason in $T$ and suppose 
$\forall v\, \exists p \ 
(\axioms{V}{v} \rightarrow \bewijs{U}{p,\translated{v}{j}})$.
If we fix some $x$, we get \\ 
$\forall \, v{\leq}x \, \exists p \ (\axioms{V}{v} \rightarrow \bewijs{U}{p,\translated{v}{j}})$.
By \collection{\Sigma_1} we get the required\\
$\exists y\, \forall \, v{\leq} x \, \exists\, p{\leq}y\ 
(\axioms{V}{v}\rightarrow \bewijs{U}{\translated{v}{j}})$. It is not clear if 
$T\vdash \collection{\Sigma_1^-}$, parameter-free collection, is a necessary condition.\\

$\bullet$ $\sonetwo \not \vdash j: U\rhd_aV \rightarrow j:U\rhd_t V$.
A counter-example is given in \cite{Vis91}.\\

$\bullet$ $T\vdash j: U\rhd_t V \rightarrow j: U \rhd_{sa} V$, if $T\vdash \exp$.
If $V$ is reflexive, we get by Corollary \ref{coro:theimpliessmooth}
that 
$\vdash U \rhd_t V \leftrightarrow U\rhd_{sa} V$. However, different
interpretations are used to witness the different notions of interpretability
in this case. 
If $T\vdash \exp$, we reason as follows. We reason in $T$
and suppose that 
$\forall \varphi \, \forall p \, \exists p'\ 
(\bewijs{V}{p,\varphi} \rightarrow \bewijs{U}{p',\translated{\varphi}{j}})$.
We wish to see 
\begin{equation}\label{equa:shouldbesmooth}
\forall x \, \exists y\, \forall \, v{\leq} x \, \exists\, p{\leq}y\ 
(\axioms{V}{v}\rightarrow \bewijs{U}{\translated{v}{j}}) .
\end{equation}

So, we pick $x$ arbitrarily and consider\footnote{To see that
$\nu$ exists, we seem to also use some collection; we collect all the 
$v_i \leq x$ for which $\axioms{V}{v_i}$. However, it is not hard to see that 
we can consider $\nu$ also without collection since we use a natural coding.}
$\nu \eqbydef \bigwedge\limits_{\axioms{V}{v_i} \wedge v_i\leq x} v_i$.
Notice that in the worst case, for all $y\leq x$, we have \axioms{V}{y}, whence
the length of $\nu$ can be bounded by $x\cdot |x|$. Thus, $\nu$ itself 
can be bounded
by $x^{x}$, which exists whenever $T\vdash \exp$.
Clearly, $\exists p\ \bewijs{V}{p,\nu}$ whence by our assumption
$\exists p'\ \bewijs{U}{p',\translated{\nu}{j}}$.
In a uniform way, with just a slightly larger proof $p''$, every
$\translated{v_i}{j}$ can be extracted from the proof $p'$ of $\translated{\nu}{j}$.
We may take this $p''\approx y$ to obtain (\ref{equa:shouldbesmooth}).
Note that $T\vdash \exp$ is not a necessary condition since $\rhd_{t}$ implies $\rhd_{a}$ and if we have $B\Sigma_1$ the latter implies $\rhd_{sa}$.\\

$\bullet$ $\sonetwo \vdash j:U\rhd_{sa}V \rightarrow j:U\rhd_{st}V$. 
So, we wish to see that 
\[
\forall  x\, \exists y\, \forall \, \varphi {\leq} x\, \forall \, p{\leq}x \, \exists\, p'{\leq}y\ 
(\bewijs{V}{p,\varphi} \rightarrow \bewijs{U}{p',\translated{\varphi}{j}})
\]
from the assumption that $j: U \rhd_{sa}V$. So, we pick $x$ arbitrarily. If now for some 
$p\leq x$ we have \bewijs{V}{p,\varphi}, then clearly $\varphi \leq x$ and
all axioms $v_i$ of $V$ that occur in $p$ are $\leq x$. By our assumption $j:U\rhd_{sa}V$, we can
find a $y_0$ such that we can find proofs $p_i \leq y_0$ for all the \trans{v_i}{j}.
Now, with some sloppy notation, 
let $p^j [v_i^j / p_i]$ denote the $j$-translation of $p$ where each $j$-translated axiom $v_i^j$ is replaced by $p_i$. 

Clearly, $p^j [v_i^j / p_i]$ is a proof
for $\translated{\varphi}{j}$. The size of this proof can be estimated (again with sloppy 
notations):
\[
p^j [v_i^j / p_i] \leq 
p^j [v_i^j / y_0] \leq
(\translated{p}{j})^{|y_0|} \leq
(\translated{x}{j})^{|y_0|}.
\] 
The latter bound is clearly present in \sonetwo.
%
%
%
\end{proof}
%
%
%
%
%
We note that we have many admissible rules from one notion of
interpretability to another. For example,
by 
Buss's theorem on the provably total recursive functions
of \sonetwo, it is not hard to see that
\[
\sonetwo \vdash j:U \rhd_a V \Rightarrow \sonetwo\vdash j: U \rhd_t V .
\]

\noindent
In the rest of this paper, we shall at most places 
no longer write subscripts to the $\rhd$'s.
Our reading convention is then that we take that notion of interpretability that is
best to perform the argument. Often this is just smooth interpretability $\rhd_s$, 
which from now on is the notation for $\rhd_{sa}$. 

Moreover, in \cite{Vis91} some sort of conservation result concerning 
$\rhd_a$ and $\rhd_s$ is proved. For a considerable class of formulas 
$\varphi$ and theories $T$, 
and for a considerable class of arguments we have that
$T\vdash \varphi_a \Rightarrow T\vdash \varphi_s$. Here $\varphi_a$ denotes
the formula $\varphi$ using the notion $\rhd_a$ and likewise for $\varphi_s$.
Thus indeed, in many cases a sharp distinction between the notions involved is
not needed.

We could also consider the following notion of \inty.
\[
j : U\rhd_{st_1} V  \eqbydef 
\forall  x\, \exists y\, \forall \, \varphi {\leq} x\, \exists\, p'{\leq}y \ 
(\Box_V \varphi \rightarrow \bewijs{U}{p',\translated{\varphi}{j}})
\]
Clearly, $j : U\rhd_{st_1}V \rightarrow U\rhd_{st}V$. However, for the 
reverse implication one seems to need \collection{\Pi_1^-}. Also, a
straightforward proof of $U\vdash {\sf id}: U\rhd_{st_1}U$ seems 
to need \collection{\Pi_1^-}. Thus, the notion $\rhd_{st_1}$ seems to 
say more on the nature of a theory than on the nature of \inty.


\section{Cuts \index{cut} \index{definable cut} and induction}\label{sect:cuts}
Inductive reasoning is a central feature of everyday mathematical practice.
We are so used to it, that it enters a proof almost unnoticed.
It is when one works with weak theories and in the absence of sufficient induction,
that its all pervading nature is best felt.

A main tool to compensate for the lack of induction are the 
so-called definable cuts. They are definable initial segments 
of the natural numbers of a possibly non-standard model that possess some desirable properties that
we could not infer for all numbers to hold by means of induction.

The idea is really simple. So, if we can derive 
$\varphi (0) \wedge \forall x \ (\varphi (x) \rightarrow \varphi (x+1))$
and do not have access to an induction axiom for $\varphi$, we just
consider $J (x): \ \forall \, y{\leq}x\ \varphi (y)$. Clearly 
$J$ now defines an initial segment on which $\varphi$ holds.
As we shall see, for a lot of reasoning we can restrict ourselves
to initial segments rather than quantifying over all numbers.
%
%

\subsection{Basic properties of cuts}

Throughout the literature one can find some variations
on the definition of a cut. At some places, a cut is
only supposed to be an initial segment of the natural numbers.
At other places some additional closure properties are demanded.
By a well known technique due to Solovay (see for example
\cite{HP}) any definable initial segment can be shortened in 
a definable way, so that it has a lot of desirable closure properties.
Therefore, and as we almost always need the closure properties, we include them
in our definition.

\begin{definition}\label{definition:definableCut}
A definable $U$-cut is a formula $J(x)$ with 
only $x$ free, for which we have the following.
\begin{enumerate}
\item
$U\vdash J(0) \wedge \forall x\ (J (x) \rightarrow J (x+1))$

\item
$U\vdash J(x) \wedge y{\leq}x \rightarrow J(y)$

\item
$U \vdash J (x) \wedge J (y) \rightarrow J (x+ y) \wedge J (x \cdot y)$

\item
$U\vdash J(x) \rightarrow J(\omeja{x}{})$
\end{enumerate}
\end{definition}

We shall sometimes also write $x\in J$ instead of $J(x)$.
A first fundamental insight about cuts is the principle
of \emph{\Index{outside big, inside small}}. Although 
not every number $x$ is in $J$, we can find for every $x$ a proof 
$p_x$ that witnesses $x\in J$.

\begin{lemma}\label{lemm:outsidebiginsidesmall}
Let $T$ and $U$ be reasonable arithmetical theories and let $J$ be a 
$U$-cut. We have that 
\[
T\vdash \forall x\  \Box_U J(x) .
\]
Actually, we can have the quantifier over all cuts within the theory $T$, that 
is
\[
T\vdash \forall^{U{\mbox{-}}{\sf Cut}}J \, \forall x\ \Box_U J(x) .
\]
\end{lemma}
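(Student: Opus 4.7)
The plan is to exploit the closure properties in the definition of a cut, combined with efficient numerals, to build a $U$-proof of $J(\overline{x})$ whose size grows only polynomially in $|x|$. This is the standard ``outside big, inside small'' argument, but we have to be careful that everything fits inside \sonetwo.

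First I would fix $U$, $T$, and a $U$-cut $J$, and work inside $T$ (which contains \sonetwo). Using the dyadic structure of efficient numerals $\overline{x}$ (so that $\overline{2x}$ and $\overline{2x+1}$ are obtained from $\overline{x}$ by prepending one multiplication or one application of $\suc$), I would define by recursion a proof $p_x$ of $J(\overline{x})$ as follows:
\begin{itemize}
\item $p_0$ is a short proof of $J(0)$, available from clause (1) of Definition~\ref{definition:definableCut};
\item $p_{2x}$ extends $p_x$ by using clause (3), closure of $J$ under $+$: from $J(\overline{x}) \wedge J(\overline{x})$ we conclude $J(\overline{x}+\overline{x})$ and rewrite $\overline{x}+\overline{x}$ as $\overline{2x}$ by a bounded number of basic arithmetic steps;
\item $p_{2x+1}$ extends $p_{2x}$ by closure under $+1$ from clause (1).
\end{itemize}
Each step attaches a block whose size is bounded by a polynomial in $|x|$ (the rewriting $\overline{x}+\overline{x}=\overline{2x}$ and the successor step each cost only $\bigo{|\overline{x}|}$, hence $\bigo{|x|}$). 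Therefore the total size of $p_x$ is bounded by some polynomial in $x$, so the function $x\mapsto p_x$ is provably total in \sonetwo. The formal verification that the constructed object is indeed a $U$-proof of $J(\overline{x})$ is a $\Sigma^b_1$ statement, and it goes through by PIND on $x$, with the induction hypothesis providing $\bewijs{U}{p_x, J(\overline{x})}$ and the definition of $p_{2x}$ and $p_{2x+1}$ yielding the corresponding proofs at the next stage.

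This shows $T\vdash\forall x\,\Box_U J(x)$. For the uniform version with the quantifier over cuts inside $T$, I would note that the entire construction of $p_x$ and the verification \bewijs{U}{p_x, J(\overline{x})} are uniform in the code of the cut formula $J$: the only ingredients used are that $U$ proves the four clauses of Definition~\ref{definition:definableCut}, and these facts about the fixed cut $J$ are themselves provable in $T$. Thus $J$ can be treated as a free parameter ranging over $U$-cuts, and one obtains
\[
T\vdash \forall^{U\mbox{-}{\sf Cut}}J\,\forall x\,\Box_U J(x).
\]

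The main technical obstacle is controlling the size of $p_x$: one must avoid any construction whose size grows like $\overline{x}$ itself (e.g., unary numerals or padding), because then \bewijs{U}{p_x,J(\overline{x})} would not be a total $\Sigma^b_1$ function in \sonetwo. The use of efficient numerals and the fact that closure under $+$ alone suffices to handle doubling (rather than invoking $\sharp$ or $\omega_1$) are exactly what makes the argument fit within bounded arithmetic.
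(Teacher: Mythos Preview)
Your proposal is correct and follows essentially the same approach as the paper's own proof: build a $U$-proof of $J(\overline{x})$ by iterating the cut's closure clauses, switching to efficient (dyadic) numerals so that the construction fits into \sonetwo. The paper's proof is in fact considerably sketchier than yours---it first describes the naive ``apply successor $a{-}1$ times'' idea and then merely remarks that in weak theories one must pass to efficient numerals---whereas you actually carry out the dyadic recursion (using closure under $+$ for doubling and closure under $\suc$ for the odd step) and make the PIND verification explicit.

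One small imprecision worth flagging: your claim that ``the total size of $p_x$ is bounded by some polynomial in $x$'' is slightly optimistic. There are $|x|$ stages, and each block has \emph{length} polynomial in $|x|$, so the total \emph{length} of $p_x$ is polynomial in $|x|$; its \emph{code}, however, is $2^{|x|^{O(1)}}$, i.e.\ bounded by $\omega_1^n(x)$ for some fixed $n$ rather than by a polynomial in $x$. This does not damage the argument---$\omega_1$ is a term of the language and $\exists\, p{\leq}\omega_1^n(x)\ \bewijs{U}{p, J(\dot x)}$ is still $\Sigma^b_1$, so PIND applies---but the bound you state should be $\omega_1^n(x)$, not a polynomial. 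This is exactly what the paper records in the remark following the lemma.
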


\begin{proof}
Let us start by making the 
quantifier \label{symb:forallcut}
$\forall^{U{\mbox{-}}{\sf Cut}} J$ 
a bit more precise.
By $\forall^{U{\mbox{-}}{\sf Cut}} J$ we shall mean
$\forall J\ (\Box_U \formal{Cut} (J) \rightarrow \ldots )$. Here $\formal{Cut}(J)$ is the 
definable function that sends the code of a formula $\chi$ with 
one free variable to the code of the formula that expresses that 
$\chi$ defines a cut.

For a number $a$, we start with the standard proof of $J(0)$. This proof
is combined with $a{-}1$ many instantiations of the standard proof of 
$\forall x\ (J (x) \rightarrow J(x+1))$. In the case of weaker theories, we
have to switch to efficient numerals to keep the bound of the proof within range.
\end{proof}

\begin{remark}\label{rema:outsidebig}
The proof sketch actually tells us that (provably in \sonetwo)
for every $U$-cut $J$, there is an $n\in \omega$ such that 
$\forall x\ \Box_{U,n} J(x)$.
\end{remark}

\begin{lemma}\label{lemm:cutsclosedunderterms}
Cuts are provably closed under terms, that is
\[
T \vdash \forall^{U{\mbox{-}}{\sf Cut}} J \, \forall^{\sf Term} t \ \Box_U
\forall \, \vec{x} {\in} J \ t(\vec{x}) \in J .
\]
\end{lemma}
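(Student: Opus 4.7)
The plan is to proceed by (external) induction on the length of the term $t$, carried out inside $T$, and to effectively construct, for each $t$, a code of a $U$-proof $\pi_t$ witnessing $\forall \vec{x}\,(\bigwedge_i J(x_i) \rightarrow J(t(\vec{x})))$. Throughout, the cut $J$ is treated as a parameter, assuming only $\Box_U \formal{Cut}(J)$, so that the construction is uniform in $J$ and gives the displayed universal quantification over cuts.

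For the base of the induction, if $t$ is a variable $x_i$, then $\pi_t$ is just the propositional proof of $J(x_i)\rightarrow J(x_i)$. If $t$ is the constant $0$, then $\pi_t$ is read off clause (1) of Definition \ref{definition:definableCut}, which provides a $U$-proof of $J(0)$. For the inductive step, suppose $t = f(t_1,\ldots,t_k)$ and that $\pi_{t_1},\ldots,\pi_{t_k}$ have been constructed. Depending on the function symbol $f$ of the language of \sonetwo, I glue these together with a standard proof of the appropriate closure property of $J$: for $f \in \{S, +, \cdot\}$ I use clauses (1) and (3) of Definition \ref{definition:definableCut} directly; for $f \in \{|x|, \lfloor \tfrac{1}{2}x\rfloor\}$ I invoke downward closure (clause (2)) together with the \sonetwo-provable bounds $|x|\leq x$ and $\lfloor \tfrac{1}{2}x\rfloor \leq x$; and for $f = \sharp$ I use that \sonetwo proves $x\sharp y \leq \omeja{\max(x,y)}{}$ together with clauses (3), (4), and downward closure. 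In each case $\pi_t$ is obtained from $\pi_{t_1},\ldots,\pi_{t_k}$ by appending a block of fixed size depending only on the symbol $f$.

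The only point requiring care is that this induction on term structure must itself be carried out in the weak meta-theory \sonetwo. Two features make this routine. First, each inductive step enlarges the proof by a bounded constant, so $|\pi_t|$ is linear in the length of $t$, and the whole construction can be presented as a $\Sigma^b_1$-definable recursion over codes of terms, to which LIND on the length of $t$ (equivalently, $\Sigma^b_1$-PIND) applies. Second, the verification that the appropriate closure instance of $J$ is $U$-provable is uniform in $J$ by Lemma \ref{lemm:outsidebiginsidesmall} and the assumption $\Box_U \formal{Cut}(J)$, and this uniformity is exactly what lets us pull the quantifier over cuts outside the box.

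The main obstacle, then, is not mathematical but bookkeeping: making sure that the codes of the intermediate $U$-proofs stay of size polynomial in the code of $t$, so that the recursion is visibly $\Sigma^b_1$ and the induction is available in \sonetwo. Once one fixes a natural coding in which each syntactic gluing operation is poly-time (as we did in Section \ref{subs:coding}), this is automatic, and the lemma follows.
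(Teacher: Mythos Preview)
Your proposal is correct and follows essentially the same approach as the paper: induction on the structure of the term $t$, observing that the naive $\Sigma_1$-induction is in fact bounded because the constructed $U$-proofs grow only polynomially in $t$. The paper's proof is a two-line sketch of exactly this idea; your version spells out the base and inductive cases and the complexity bookkeeping in more detail than the paper does, but there is no substantive difference in strategy.
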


\begin{proof}
By an easy induction on terms, fixing some $U$-cut $J$. Prima facie this looks
like a $\Sigma_1$-induction but it is easy to see that the proofs have 
poly-time (in $t$) bounds, whence the induction is $\Delta_0(\omega_1)$.
\end{proof}

\noindent
As all $U$-cuts are closed under \omeja{x}{} and the smash 
function $\sharp$, simply relativizing all 
quantors to a cut is an example of an interpretation of 
\sonetwo in $U$. We shall always 
denote both the cut and the interpretation that it defines by the same 
letter.

\subsection{Cuts and the \Index{Henkin construction}}

It is well known that we can perform the Henkin construction 
in a rather weak meta-theory. As the Henkin model has 
a uniform description, we can link it to interpretations.
The following theorem makes this precise.

\begin{theorem}\label{theo:informalhenkin}
If $U \vdash \cons{V}$, then $U\rhd V$.
\end{theorem}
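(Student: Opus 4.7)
The plan is to formalize the Henkin--Hasenjaeger term-model construction inside $U$, producing a uniform translation $k = \langle \delta, F\rangle$ that interprets $V$. Granted $U \vdash \cons{V}$, the idea is that $U$ can internally build a Henkin model out of closed terms of a conservative Henkin extension of $V$, and then read off the translation from the resulting maximal consistent Henkin theory.

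First, reasoning inside $U$, I would expand the language of $V$ by countably many fresh constants $c_0, c_1, \ldots$ and enumerate all formulas $\varphi_i(x)$ with a single free variable in this extended language. Form the Henkin extension $V^{\ast} := V + \{\, \exists x\, \varphi_i(x) \to \varphi_i(c_{f(i)}) \mid i \in \omega\, \}$, where $f(i)$ picks a fresh constant not occurring in $\varphi_i$ or in any earlier Henkin axiom. A standard argument formalizable in a weak base theory shows $\cons{V} \to \cons{V^{\ast}}$: any hypothetical inconsistency in $V^{\ast}$ uses only finitely many Henkin axioms, which can be peeled off one by one (replacing each Henkin constant by an existential witness) to yield an inconsistency of $V$.

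Next, enumerate all sentences $\psi_0, \psi_1, \ldots$ of the extended language and define a predicate $W$ by the Lindenbaum clause: $\psi_n \in W$ iff $V^{\ast}$ together with $\{\psi_i \mid i<n,\ \psi_i \in W\}$ and $\psi_n$ itself is consistent. Then $W$ is a Henkin-complete maximal consistent extension. Define the translation $k$ by letting $\delta(x)$ say ``$x$ codes the normal-form representative of a closed term of the extended language'' (normal form meaning the $<$-least term in its $W$-class, where $s \sim t \equivbydef (s{=}t) \in W$), and set $F(R)(\vec x) \equivbydef R(\vec x) \in W$ on such normal forms. The classical truth lemma then yields $U \vdash \varphi^k$ for each axiom $\varphi$ of $V$, so that $k : U \rhd V$.

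The main obstacle is arranging the Lindenbaum step inside a weak base theory like \sonetwo, since the naive definition of $W$ uses unbounded $\Sigma_1$-recursion on the length of the enumeration and is not available there. The standard workaround is to run the Henkin--Lindenbaum construction only on a definable $U$-cut $J$ of G\"odel numbers: cuts are closed under $\sharp$ and $\omega_1$, so they are long enough for the bounded recursions involved to go through, while by outside-big-inside-small (Lemma~\ref{lemm:outsidebiginsidesmall}) every standard axiom $\varphi$ of $V$ has $\ulcorner \varphi \urcorner \in J$ from $U$'s viewpoint, so the truth lemma still delivers $\varphi^k$ for each such axiom. A uniform bound on the $U$-proofs of the $\varphi^k$ also drops out of the construction, giving the smooth version $U \rhd_{sa} V$ as needed under the paper's default reading of $\rhd$.
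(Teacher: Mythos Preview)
Your proposal is correct and follows essentially the same Henkin term-model approach that the paper sketches: add Henkin witnesses, extend to a maximal consistent Henkin set, read off the interpretation from the resulting term model, and for the weak-metatheory version restrict the construction to a definable cut. The one cosmetic deviation is your use of normal-form representatives for identity; the paper simply interprets $=$ by the induced equivalence relation and treats the question of identity-preserving interpretations separately (cf.\ Remark~\ref{rema:equalitypreserving}).
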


Early treatments of this theorem were given in \cite{wang51} and
\cite{HB}. A first fully formalized version was given in 
\cite{Fef60}.
A proof of Theorem \ref{theo:informalhenkin}
would closely follow the Henkin construction.

Thus, first
the language of $V$ is extended so that it contains a witness 
$c_{\exists x \varphi(x)}$ for
every existential sentence $\exists x \ \varphi(x)$. 
Then we can extend $V$ to a maximal
consistent $V'$ in the enriched language, containing all sentences
of the form 
$\exists x \varphi (x) \rightarrow \varphi (c_{\exists x \varphi(x)})$.
This $V'$ can be seen as a term model with a corresponding 
truth predicate. Clearly, if $V\vdash \varphi$ then
$\varphi \in V'$. It is not hard to see that $V'$ is representable 
(close inspection yields a $\Delta_2$-representation) in $U$.

At first sight the argument uses quite some induction in extending 
$V$ to $V'$. Miraculously enough, the whole argument can be adapted
to \sonetwo. The trick consists in replacing the use of induction
by employing definable cuts as is explained above.
We get the following theorem.

\begin{theorem}\label{theo:formalhenkin}
\index{Henkin construction!in weak theories}
For any numberizable theories $U$  and $V$, we have that 
\[
\sonetwo\vdash \Box_U \cons{V} \rightarrow \exists k\ 
(k:U\rhd V  \ \& \ \forall \varphi \  
\Box_U ( \Box_V \varphi \rightarrow \translated{\varphi}{k})).
\]
\end{theorem}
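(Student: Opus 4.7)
The plan is to adapt the classical Henkin construction so that the resulting interpretation $k$ has a uniform description as a \emph{fixed} formula in the language of $U$, computed poly-time from the code of $V$. Reasoning in \sonetwo and assuming $\Box_U \cons{V}$, I would explicitly write down $k = \langle \delta_H, F_H \rangle$: the domain specifier $\delta_H(x)$ says ``$x$ codes a closed term of the language of $V$ enlarged with Henkin constants $c_{\exists y \varphi(y)}$'', and for each relation $R$ of $V$, $F_H(R)(\vec{t})$ says ``the atomic sentence $R(\vec{t})$ belongs to the canonical maximal consistent Henkin extension $V^\star$''. Membership in $V^\star$ is defined inside $U$ by the usual stage-by-stage procedure: enumerate the sentences $\sigma_0, \sigma_1, \ldots$ of the extended language and at stage $i$ add $\sigma_i$ if the resulting theory (including the Henkin axioms $\exists y\, \varphi(y) \rightarrow \varphi(c_{\exists y \varphi})$) remains consistent, otherwise add $\neg\sigma_i$. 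The whole construction has a $\Delta_2$ description whose code is a simple function of the code of $V$, so no collection is needed to produce $k$.

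The two facts I would then establish, both inside $\Box_U$, are: (i) the characteristic ``truth lemma'' $\sigma \in V^\star \leftrightarrow \translated{\sigma}{k}$ for sentences $\sigma$ of the extended language; (ii) every axiom of $V$ is in $V^\star$. Both rely on the consistency-maintenance step working at each stage, for which the hypothesis $\Box_U \cons{V}$ is exactly what is needed (together with provable $\Sigma_1$-completeness to internalize the choices). Once (i) and (ii) hold, translating a $V$-proof $p$ of $\varphi$ axiom by axiom with Lemma~\ref{lemm:translatedproofs} yields the first conjunct $k: U \rhd V$. For the uniformity clause, the truth lemma and axiom-preservation give $\Box_U$-provably $(\Box_V \varphi \rightarrow \varphi \in V^\star \rightarrow \translated{\varphi}{k})$, and since this holds uniformly in $\varphi$, we get $\forall \varphi\, \Box_U(\Box_V \varphi \rightarrow \translated{\varphi}{k})$ outside.

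The main obstacle is exactly where the standard proof uses induction on formula complexity to prove the truth lemma: in \sonetwo we cannot do an unrestricted induction of this shape. The standard fix, and the one I would use, is to introduce a definable $U$-cut $J$ and prove the truth lemma only for formulas with complexity in $J$. By Lemma~\ref{lemm:outsidebiginsidesmall} (outside big, inside small), for each externally fixed $\varphi$ we have $\Box_U\, J(\ulcorner \varphi \urcorner)$, so the per-$\varphi$ truth lemma is available inside $U$, which is all that the statement of the theorem demands. A secondary issue is keeping the proof-size bounds under control when converting $V$-proofs into $U$-proofs of translated axioms; this is handled by Remark~\ref{rema:translation} and the efficient-numeral conventions fixed earlier, so the codes of the relevant $U$-proofs are poly-time in $\varphi$ and thus the quantifier combinatorics stay within $\sonetwo$.
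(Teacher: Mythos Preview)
Your outline is correct and matches the Henkin-with-cuts strategy that the paper sketches in the discussion surrounding the theorem; the paper itself does not give a proof but simply cites \cite{Vis91}. The one piece of additional content in the paper's ``proof'' is the quantitative sharpening extracted from \cite{Vis91}: for some fixed standard $m$ one has $\forall \varphi\,\exists\,p{\leq}\omeja{\varphi}{m}\ \bewijs{U}{p,\Box_V\varphi\to\translated{\varphi}{k}}$, which your final paragraph alludes to but does not state explicitly.
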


\begin{proof}
A proof can be found in \cite{Vis91}. Actually something stronger is proved there. Namely, that
for some standard number $m$ we have
\[
\forall \varphi \, \exists \, p {\leq} \omeja{\varphi}{m} \ \bewijs{U}{p, \Box_V \varphi \rightarrow 
\translated{\varphi}{k}} .
\]
\end{proof}

As cuts have nice closure properties, many arguments can be
performed within that cut. The numbers in the cut will, so to say, play
the role of the normal numbers. It turns out that
the whole Henkin argument can be
carried out using only the consistency on a 
\index{Henkin construction!on a cut}cut.

\label{symb:cutcons}
We shall write $\Box^J_T \varphi$ for 
$\exists \, p{\in} J \ \bewijs{T}{p,\varphi}$. Thus, it is
also clear what ${\Diamond}^J_T \varphi$  and 
\cutcons{V}{J} mean.

\begin{theorem}\label{theo:kuthenkin}
We have Theorem \ref{theo:formalhenkin} also in the 
following form.
\[
T\vdash \forall^{U{\mbox{-}}{\sf Cut}}I\ \Big[ \Box_U \cutcons{V}{I} \rightarrow \exists k\ 
(k:U\rhd V  \ \& \ \forall \varphi \  
\Box_U ( \Box_V \varphi \rightarrow \translated{\varphi}{k}))\Big]
\]
\end{theorem}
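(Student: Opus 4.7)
The plan is to revisit the proof of Theorem \ref{theo:formalhenkin} from \cite{Vis91} and observe that it can be carried out using only cut-consistency of $V$. First, recall the sharper statement noted after Theorem \ref{theo:formalhenkin}: for some standard $m$, the Henkin construction produces, uniformly in $\varphi$, a proof of size at most $\omeja{\varphi}{m}$ of $\Box_V \varphi \to \translated{\varphi}{k}$. Since every $U$-cut $I$ is closed under $\omega_1$ by Definition \ref{definition:definableCut}, bounds of this form live comfortably inside $I$, which is the essential leverage that allows one to replace $\cons{V}$ by $\cutcons{V}{I}$.

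Next, I would inspect the Henkin construction step by step, tracking the only places where $V$'s consistency enters. The construction extends $V$ by Henkin witnesses $c_{\exists x\, \varphi}$ and then builds a maximal consistent $V'$ by successively deciding sentences in a fixed enumeration; at each step the only obstacle one needs to exclude is a $V$-proof of $\bot$ from the axioms of $V$ together with the (short) Henkin axioms and the sentence being decided. The potentially bad proofs at stage $n$ are obtained by prepending these auxiliary short formulas to hypothetical earlier proofs of $\bot$; their sizes are therefore bounded by a fixed term applied to $n$ and to the code of the sentence. By Lemma \ref{lemm:cutsclosedunderterms}, cuts are closed under such terms, so all the proofs we must rule out remain in $I$. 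Hence $\Box_U \cutcons{V}{I}$ is exactly what is needed to drive the construction.

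The main obstacle is precisely this bookkeeping: one must verify that every appeal to consistency during the Henkin construction, and every appeal inside the verification that $k$ is a translation interpreting each axiom of $V$, can be uniformly bounded by a term applied to the code of the relevant sentence, and that these bounds all remain within an arbitrary $U$-cut. Once this is in place, the construction yields the same interpretation $k$, the same uniform $\omeja{\varphi}{m}$-bound on the proof of $\Box_V\varphi \to \translated{\varphi}{k}$, and the same verification that $k : U\rhd V$, but with $\cons{V}$ everywhere weakened to $\cutcons{V}{I}$. Universal quantification over $U$-cuts $I$ inside $T$ is then obtained exactly as in Lemma \ref{lemm:outsidebiginsidesmall}, by reading the construction as providing, for each fixed $I$ satisfying $\Box_U \formal{Cut}(I)$, a concrete $k$ that works uniformly.
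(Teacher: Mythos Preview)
Your proposal is correct and follows essentially the same approach as the paper: inspect the Henkin construction from Theorem \ref{theo:formalhenkin}, observe that every operation on hypothetical proofs is bounded by some $\omeja{p}{k}$ for standard $k$, and conclude that since any $U$-cut $I$ is closed under $\omega_1$, cut-consistency $\cutcons{V}{I}$ suffices in place of full $\cons{V}$. The paper's own proof is a terse three-line version of exactly this; your additional bookkeeping about where consistency is invoked and how the bounds are controlled is a faithful expansion of what the paper leaves implicit.
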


\begin{proof}
By close inspection of the proof of Theorem \ref{theo:formalhenkin}.
All operations on hypothetical proofs $p$ can be bounded by 
some \omeja{p}{k}, for some standard $k$. As $I$ is closed under
\omeja{x}{}, all the bounds remain within $I$.
\end{proof}
We conclude this subsection with two asides, closely related to the
Henkin construction.

\begin{lemma}\label{lemm:conpredicate}
Let $U$ contain \buss. We have
that 
$U \vdash \cons{\pred}$.
Here, $\cons{\pred}$ is a natural formalization of the statement that predicate logic 
is consistent.
\end{lemma}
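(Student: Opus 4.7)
The plan is to exhibit, inside \sonetwo, a polynomial-time computable truth evaluation that assigns $\top$ to every formula derivable in pure predicate logic while assigning $\bot$ to the constant $\bot$. Since $U$ extends \sonetwo, this suffices.

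I would work with the canonical trivial structure: a one-element domain in which every relation symbol (including identity) is interpreted as identically true. Define a function $v$ on (codes of) formulas by recursion on syntactic structure: $v(R(\vec t\,)) := \top$ on every atomic formula, $v(\bot) := \bot$, $v$ commutes with all Boolean connectives, and $v(\forall x\,\varphi) := v(\exists x\,\varphi) := v(\varphi)$. Because the domain has a single element, the quantifier clause collapses the evaluation to a propositional computation. The recursion unfolds in depth $O(|\varphi|)$ with constant-cost steps, so $v$ is polynomial time and the graph of $v$ is $\Delta^b_1$ in \sonetwo. A brief induction on the structure of $\varphi$ shows that $v$ is invariant under substitution of terms for free variables: since all atomic subformulas evaluate to $\top$ regardless of the terms appearing in them, substitution cannot change the overall value.

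Next I would verify, provably in \sonetwo, that the assignment is sound for pure predicate logic. Every axiom schema collapses to a tautology under $v$: the propositional axioms are self-evidently tautologous under a constant-$\top$ valuation, while quantifier axioms such as $\forall x\,\varphi \rightarrow \varphi(t)$ reduce via the quantifier clause and substitution-invariance to $v(\varphi) \rightarrow v(\varphi)$. The inference rules preserve $v$-truth: modus ponens is immediate from the Boolean clause for $\rightarrow$, generalization follows from $v(\forall x\,\varphi) = v(\varphi)$, and the remaining introduction/elimination rules of natural deduction are handled analogously. With these two ingredients in place, one proves by induction on a proof $p$ in \pred that every conclusion of a subproof of $p$ is $v$-true. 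Linearising the proof as a sequence of judgements makes this an induction on sequence length with a $\Pi^b_1$-invariant (the invariant is just that $v$ returns $\top$ on a polytime-accessible component), which fits within the induction principles available in \sonetwo. Since $v(\bot) = \bot$, no such $p$ can prove $\bot$, so $\cons{\pred}$ holds in \sonetwo and hence in $U$.

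The only genuine obstacle is keeping the complexity of the soundness induction within the fragment available in \sonetwo. Had we tried to evaluate in any nontrivial model, the clause for $\forall$ would require a search over the domain and $v$ would become at best $\Sigma^b_1$, pushing the soundness statement beyond the reach of \sonetwo-induction. The deliberate choice of the degenerate one-element structure collapses the whole truth definition to a polytime propositional computation; this is precisely the simplification that makes the argument go through in such a weak base theory, and it is why the same trick would not immediately give consistency of any theory with nonlogical axioms.
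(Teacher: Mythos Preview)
Your proposal is correct and follows essentially the same approach as the paper, which simply reads ``By defining a simple (one-point) model within \sonetwo.'' You have supplied the details the paper omits---the collapse of the truth definition to a polytime propositional evaluation and the verification that the soundness induction stays within the fragment of induction available in \sonetwo---and these details are accurate.
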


\begin{proof}
By defining a simple (one-point) model within \sonetwo.
\end{proof}

\begin{remark}\label{rema:equalitypreserving}
If $U$ proves $L\Delta_2^0$, then 
it holds that $U\rhd V$
iff $V$ is interpretable in $U$ by some interpretation that maps
identity to identity.
\end{remark}

\begin{proof}
Suppose $j: U\rhd V$ with 
$j =\pair{\delta ,F}$. We can define
$j' \eqbydef \pair{\delta' , F'}$ with
$\delta' (x) \eqbydef \delta (x) \wedge \forall \, y{<}x\ 
(\delta (y) \rightarrow y \translated{\neq}{j} x)$.
$F'$ agrees with $F$ on all symbols except that it maps identity to identity.
By the minimal number principle we can prove
$\forall x \ (\delta (x) \rightarrow \exists x' \ 
(x' \translated{=}{j}x) \wedge \delta'(x))$, and thus
$\forall \vec{x} \ 
(\delta' (\vec{x}) \rightarrow (\translated{\varphi}{j} (\vec{x}) \leftrightarrow 
\translated{\varphi}{j'} (\vec{x})
))$ for all formulae $\varphi$.
\end{proof}

\section{\Index{Pudl\'ak's lemma}}

In this section we will state and prove what is known as Pudl\'ak's lemma. Moreover, we shall prove a very useful consequence of this lemma. Roughly speaking, Pudl\'ak's lemma tells us how interpretations bear on the models that they induce. Therefor, let us first see how interpretations and models are related.

\subsection{Interpretations and models}

We can view interpretations $j: U\rhd V$ as 
a way of defining uniformly a model $\cal N$ of $V$ inside a model
$\cal M$ of $U$. Interpretations in 
foundational papers mostly bear the guise of a uniform model construction.

\begin{definition}\label{symb:transmodel}
Let $j: U\rhd V$ with $j= \pair{\delta, F}$. If ${\cal M}\models U$, we
denote by \translated{{\cal M}}{j} the following model.
\begin{itemize}
\item 
$|\translated{{\cal M}}{j}|= \{ 
x\in |{\cal M}| \mid {\cal M} \models \delta (x) \}/\equiv$,
where $a\equiv b$\/ iff ${\cal M}\models a=^jb$.   
\item 
$\translated{{\cal M}}{j}\models R (\alpha_1, \ldots , \alpha_n)$ iff
${\cal M} \models F(R) (a_1, \ldots , a_n)$, for
some  $a_1\in\alpha_1$, {\ldots},  $a_n\in\alpha_n$.
\end{itemize}
\end{definition}

\noindent
The fact that $j : U\rhd V$ is now reflected in the 
observation that, whenever ${\cal M}\models U$, then
$ \translated{{\cal M}}{j}\models V$.

On many occasions viewing interpretations as uniform model constructions provides
the right heuristics.

\subsection{Pudl\'ak's isomorphic cut}

Pudl\'ak's lemma is central to many arguments in the field of interpretability logics.
It provides a means to compare a model $\cal M$ of $U$ and its internally 
defined model \translated{{\cal M}}{j} of $V$ if $j: U\rhd V$.
If $U$ has full induction, this comparison is fairly easy.

\begin{theorem}\label{theo:initialembedding}
Suppose $j:U\rhd V$ and $U$ has full induction. Let $\cal M$ be a model of $U$.
We have that ${\cal M} \initialsegment \translated{{\cal M}}{j}$ via a 
\index{end-extension}definable embedding.
\end{theorem}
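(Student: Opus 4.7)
The plan is to define, inside the theory $U$, a map $f\colon {\cal M}\to \translated{{\cal M}}{j}$ which sends each standard-shape number $x$ to the equivalence class of the $x$-th iterated $j$-successor of $j$'s zero, and then to verify, using the full induction available in $U$, that $f$ is a homomorphism onto an initial segment of $\translated{{\cal M}}{j}$.

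First I would set up the map. Let $0^j$ and $S^j$ abbreviate the $j$-translations of the constant $0$ and the successor of $V$; more precisely, let $0^j$ denote some element $c$ with $\delta(c)$ for which $U$ proves $c$ represents zero in $\translated{\cdot}{j}$, and let $S^j(y)$ denote the ($j$-translated) successor operation. Working inside $U$, I would define
\[
f(x) = y \ \equivbydef \ \exists s\ \bigl(\text{$s$ codes a sequence $\langle y_0,\ldots,y_x\rangle$}\land y_0 = 0^j \land \forall i{<}x\ y_{i+1}=^j S^j(y_i)\land y_x =^j y\bigr).
\]
Using full induction in $U$, I would prove $\forall x\,\exists y\, f(x)=y$ (the induction step uses the $V$-axiom $\forall u\,\exists v\,v=S(u)$ translated under $j$), and $\forall x\,\forall y\,\forall y'\,(f(x)=y\land f(x)=y'\to y=^j y')$, so that $f$ is a well-defined function from $|{\cal M}|$ into $|\translated{{\cal M}}{j}|$. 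Here sequence coding is available because $U$ contains arithmetic; standard induction suffices to produce, at stage $x{+}1$, an extension of the sequence from stage $x$.

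Next I would verify that $f$ is a homomorphism. The identity $f(0)=0^j$ is immediate; $f(Sx)=^j S^j(f(x))$ is immediate from the defining clause. The equations $f(x+y)=^j f(x)+^j f(y)$, $f(x\cdot y)=^j f(x)\cdot^j f(y)$, and the equivalence $x\leq y \Leftrightarrow f(x)\leq^j f(y)$ are then proved by induction on $y$ in $U$; the base cases use the $j$-translations of the defining axioms of $+$, $\cdot$, $\leq$ in $V$, and the induction steps use the successor clause. Since $U$ has full induction, there is no complexity restriction on the induction formulas involved.

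Finally I would show that the image of $f$ is an initial segment of $\translated{{\cal M}}{j}$, i.e.\ $\forall x\,\forall z\,(\delta(z)\land z\leq^j f(x)\to \exists y\,(y\leq x\land f(y)=^j z))$. This is the main obstacle and is where the hypothesis of full induction in $U$ is essential: the formula is $\Pi_1$ (with a bounded existential) rather than $\Delta_0$, so bounded-arithmetic induction would not obviously suffice. I would argue by induction on $x$ in $U$. For $x=0$ the $j$-translated axiom $\forall z\,(z\leq 0 \to z = 0)$ forces $z=^j 0^j = f(0)$. For the step, given $z\leq^j f(x{+}1)=^j S^j(f(x))$, the $j$-translated $V$-axiom $\forall z\forall u\,(z\leq S(u)\to z\leq u\lor z=S(u))$ gives either $z\leq^j f(x)$, whence the induction hypothesis yields $y\leq x$ with $f(y)=^j z$, or $z=^j S^j(f(x))=^j f(x{+}1)$, and we take $y=x{+}1$. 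Together with injectivity of $f$ (provable by induction using that $S^j$ is injective in $V$), this shows $f$ witnesses ${\cal M}\initialsegment \translated{{\cal M}}{j}$, and the definability of the embedding is manifest from the $\Sigma_1$ definition of $f$.
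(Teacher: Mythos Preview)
Your proof is correct and follows essentially the same strategy as the paper's: define the embedding by iterating the $j$-successor from $0^j$ and verify totality, the homomorphism equations, and the initial-segment property by full induction in $U$. The one simplification the paper employs that you omit is to first invoke Remark~\ref{rema:equalitypreserving} (available since full induction gives $L\Delta_2^0$) to replace $j$ by an interpretation sending identity to identity, so that one can work with genuine equality rather than modulo $=^j$ throughout; your direct treatment via $=^j$-classes is perfectly sound, just slightly more laborious.
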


\begin{proof}
If $U$ has full  induction and $j : U\rhd V$, we may by Remark \ref{rema:equalitypreserving}
actually assume that $j$ maps identity in $V$ to identity in $U$. Thus, we 
can define the following function.
\[
f \eqbydef \left\{ \begin{array}{l}0 \mapsto \translated{0}{j}\\
x+1 \mapsto f (x) \translated{+}{j} \translated{1}{j} 
\end{array}
\right.
\]

Now, by induction, $f$ can be proved to be total. Note that
full induction is needed here, as we have a-priori no bound on the
complexity of \translated{0}{j} and \translated{+}{j}. Moreover, it 
can be proved that $f (a + b) = f(a) \translated{+}{j} f(b)$,
$f (a \cdot b) = f (a) \, \translated{\cdot}{j} f (b)$ and 
that 
$y \translated{\leq}{j} f(b) \rightarrow \exists \, a{<}b\ f(a) =y$. 
In other words, that
$f$ is an isomorphism between its domain and its co-domain and the 
co-domain is an initial segment of \translated{{\cal M}}{j}.
\end{proof}

If $U$ does not have full induction, a comparison between $\cal M$ and 
\translated{{\cal M}}{j} is given by Pudl\'ak's lemma, first explicitly mentioned in
\cite{pudl85}. Roughly, Pudl\'ak's lemma says that in the general case, we can find a 
definable $U$-cut $I$ of $\cal M$ and a definable embedding 
$f: I \longrightarrow \translated{{\cal M}}{j}$ such that
$f[I] \initialsegment \translated{{\cal M}}{j}$.

In formulating the statement we have
to be careful as we can no longer assume that identity is mapped to
identity. A precise formulation of Pudl\'ak's lemma in terms of an
isomorphism between two initial segments can for example be found in
\cite{joo:prol00}. We have chosen here to formulate and prove
the most general syntactic consequence of Pudl\'ak's lemma, namely that 
$I$ and $f[I]$, as substructures of $\cal M$ and \translated{{\cal M}}{j} 
respectively,
make true the same $\Delta_0$-formulas.

In the proof of Pudl\'ak's lemma we shall make the quantifier 
$\exists^{j,J{\mbox{-}}{\sf function}} h$ explicit. It basically means that 
$h$ defines a function from a cut $J$ to 
the $\translated{=}{j}$-equivalence classes of
the numbers defined by the interpretation $j$. 

\begin{lemma}[Pudl\'ak's Lemma]\label{lemm:pudlak} 
\[
\sonetwo\vdash j: U\rhd V \rightarrow \exists^{U{\mbox{-}}{\sf Cut}} J \, 
\exists^{j,J{\mbox{-}}{\sf function}} h \, \forall^{\Delta_0} \varphi \
\Box_U \forall \, \vec{x}\in J\ (\translated{\varphi}{j} 
(h (\vec{x})) \leftrightarrow \varphi (\vec{x}))
\]
Moreover, the $h$ and $J$ can be obtained uniformly from $j$ by a function that 
is provably total in \sonetwo.
\end{lemma}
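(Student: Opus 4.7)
The plan is to mimic the proof of Theorem \ref{theo:initialembedding} by recursively defining a function $h$ with $h(0) \translated{=}{j} \translated{0}{j}$ and $h(x+1) \translated{=}{j} h(x) \translated{+}{j} \translated{1}{j}$, but then to restrict to a suitable cut $J$ where the recursion is well-behaved, since full induction is not available. Because $j:U\rhd V$ and $V$ is numberized over \sonetwo, all arithmetical symbols and axioms live inside \translated{\mathcal{M}}{j}, so the $j$-translated successor is available. I would give $h$ an $\exists\Sigma^b_1$ graph via the standard sequence coding of \sonetwo: $h(x)=y$ iff there exists a coded sequence of length $x+1$ starting at \translated{0}{j}, where each successive entry is the $j$-successor (modulo $\translated{=}{j}$) of the previous, ending at an element $\translated{=}{j}$-equivalent to $y$.

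Next I would define the cut $J(x)$ as the conjunction of: (i) $h(y)$ is defined for all $y \leq x$; (ii) the homomorphism identities $h(u+v) \translated{=}{j} h(u) \translated{+}{j} h(v)$, $h(u\cdot v) \translated{=}{j} h(u) \translated{\cdot}{j} h(v)$, and $u \leq v \leftrightarrow h(u) \translated{\leq}{j} h(v)$ hold for all $u,v \leq x$; and (iii) every $z$ with $z \translated{\leq}{j} h(x)$ in \translated{\mathcal{M}}{j} is $\translated{=}{j}$-equivalent to some $h(y)$ with $y \leq x$ (the "onto an initial segment" clause). Using Solovay's shortening trick, I would then cut $J$ further so that it is closed under the operations demanded by Definition \ref{definition:definableCut}, in particular under \omeja{x}{}. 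Closure of the raw cut under $S$, $+$, $\cdot$ follows because \translated{\mathcal{M}}{j} satisfies the corresponding arithmetical defining axioms of \sonetwo, and both the homomorphism and surjectivity clauses are preserved under these operations by direct computation inside $U$.

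Given $J$ and $h$, the $\Delta_0$-absoluteness statement is proved by a uniform construction, performed inside \sonetwo, of a $U$-proof by recursion on the syntactic build-up of $\varphi$: atomic cases are immediate from the homomorphism clause (ii); Boolean combinations are trivial; and for bounded quantifiers $\forall\, y{\leq} t(\vec x)$ one uses closure of $J$ under terms (Lemma \ref{lemm:cutsclosedunderterms}) so that $t(\vec x) \in J$, combined with clause (iii) which translates a $V$-side bound $\translated{\leq}{j} h(t(\vec x))$ back into a $U$-side bound $\leq t(\vec x)$. Since the whole construction of $h$ and $J$ from $j$ consists of polynomial-time syntactic manipulations, the witness function producing them from $j$ is \sonetwo-provably total, giving the ``moreover'' clause.

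The main obstacle I expect is clause (iii), the onto-an-initial-segment condition: showing in \sonetwo that this property is preserved as $J$ is closed under addition, multiplication, and \omeja{x}{} cannot be done by unrestricted induction, and must instead be unwound from the explicit recursive shape of $h$ and the step-by-step successor iteration — this is precisely what forces $J$ to be a proper cut rather than a universal statement. A secondary delicate point is tracking the sizes of the $U$-proofs of the $\Delta_0$-equivalences so that they grow at most like \omeja{\varphi}{n} for some fixed $n$, which is what makes the induction on $\varphi$ implementable by a \sonetwo-provably total function rather than requiring \collection{\Sigma_1}.
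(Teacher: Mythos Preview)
Your proposal is correct and follows essentially the same route as the paper. The only organizational difference is packaging: the paper folds clauses (ii) and (iii) directly into the definition of the witnessing sequence (its predicate $\formal{Goodsequence}(\sigma,x,y)$ already demands $\sigma_k\translated{+}{j}\sigma_l\translated{=}{j}\sigma_{k+l}$, $\sigma_k\translated{\cdot}{j}\sigma_l\translated{=}{j}\sigma_{k\cdot l}$, and the onto-an-initial-segment condition), so that $J'$ need only be checked closed under successor before Solovay shortening, whereas you keep $h$ purely successor-based and load the extra conditions into $J$; both realize the same key idea of building all desired properties into the definition so that no induction is needed to establish them. One small point you leave implicit is uniqueness of $h$ modulo $\translated{=}{j}$, which the paper secures by an explicit $\forall\sigma'\forall y'$ clause in the graph of $H$; you should add this, since without it ``$h(y)$ is defined'' in clause (i) is ambiguous.
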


\begin{proof}
Again, by $\exists^{U{\mbox{-}}{\sf Cut}} J\ \psi$ we shall mean
$\exists J\ (\Box_U \formal{Cut} (J) \wedge \psi)$, where $\formal{Cut}(J)$ is the 
definable function that sends the code of a formula $\chi$ 
to the code of a formula that expresses that $\chi$ defines a cut.
We apply a similar strategy for quantifying over
$j,J$-functions. 
Given a translation $j$, the defining property for a relation $H$ to be a $j,J$-function 
is
\[
\forall \,  \vec{x}, y , y'{\in} J \ (H (\vec{x}, y) \ \& \ H (\vec{x} , y') \rightarrow 
y\translated{=}{j} y') .
\] 
We will often consider $H$ as a function $h$ and write for example 
$\psi (h (\vec{x}))$ instead of 
\[
\forall y\ ( H (\vec{x}, y) \rightarrow \psi (y)).
\]

The idea of the proof is very easy. Just map the numbers of $U$ via $h$ to the 
numbers of $V$ so that $0$ goes to $\translated{0}{j}$ and the mapping commutes with the 
successor relation. If we want to prove a property of this mapping, we might 
run into problems as the intuitive proof appeals to induction. And sufficient induction
is precisely what we lack in weaker theories.

The way out here is to just put all the 
properties that we need our function $h$ to possess into its definition. 
Of course, then the work is in checking that we still have a good definition.
Being good means here that the set of numbers on which $h$ is defined
induces a definable $U$-cut.

In a sense, we want an (definable) initial part of the numbers of $U$ to be
isomorphic under $h$ to an initial part of the numbers of $V$. Thus, $h$ should 
definitely commute with successor, addition and multiplication. Moreover, the image
of $h$ should define an initial segment, that is, be closed under the smaller than
relation. All these requirements are reflected in the definition of 
\formal{Goodsequence}. Let $\delta$ denote the domain specifier of the translatio $j$. We define
\[
\begin{array}{lll}
\formal{Goodsequence}(\sigma , x, y) 
& \eqbydef 
& \formal{lh}(\sigma) = x+1 \wedge \sigma_0 \translated{=}{j} \translated{0}{j}
\wedge \sigma_x \translated{=}{j} y \\
\ & \ &
\wedge \ \forall \, i{\leq}x \ \delta (\sigma_i) \\
\ & \ &
\wedge \ 
\forall \, i{<}x\ (\sigma_{i+1} \translated{=}{j} \sigma_i
\translated{+}{j} \translated{1}{j})\\
\ & \ &
\wedge \ \forall \, k{+}l {\leq} x \ 
(\sigma_k \translated{+}{j} \sigma_l \translated{=}{j} \sigma_{k+l})\\
\ & \ &
\wedge \ \forall \, k{\cdot}l {\leq} x \ 
(\sigma_k \translated{\cdot}{j} \sigma_l \translated{=}{j} \sigma_{k\cdot l})\\
\ &\ &
\wedge \ \forall a \ (a \translated{\leq}{j} y \rightarrow 
\exists \, i{\leq}x\ \sigma_i \translated{=}{j} a).
\end{array}
\]
Subsequently, we define
\[
\begin{array}{lll}
H(x,y) & \eqbydef &
\exists \sigma \ \formal{Goodsequence}(\sigma , x, y)\\ 
\ &\ &
\wedge \
\forall \sigma' \, \forall y' \ 
(\formal{Goodsequence} (\sigma' , x,y' ) \rightarrow 
y\translated{=}{j} y'),
\end{array}
\]
and
\[
J'(x) \eqbydef \forall \, x'{\leq}x \, \exists y \ H(x',y ).
\]
Finally, we define $J$ to be the closure of $J'$ under $+$, $\cdot$ and
$\omeja{x}{}$. Now that we have defined all the machinery we can start the real proof.
The reader is encouraged to see at what place which defining property is used in the proof.

We first note that $J'(x)$ indeed defines a $U$-cut. For 
$\Box_U J'(0)$ you basically need sequentiality of $U$, and the 
translations of the identity axioms and properties of $0$.

To see $\Box_U \forall x\ (J' (x) \rightarrow J' (x+1))$ is also not hard. It follows from the translation of basic properties 
provable in $V$, like $x=y \rightarrow x+1 = y+1$ and 
$x+ (y+1) = (x+y) +1$, etc. The other properties of Definition \ref{definition:definableCut} go similarly.

We should now see that $h$ is a $j,J$-function. This is quite easy, as we have
all the necessary conditions
present in our definition.
Actually, we have
\begin{equation}\label{equa:hbijection}
\Box_U \forall \, x,y{\in} J \ 
(h (x)\translated{=}{j} h(y) \leftrightarrow x=y)
\end{equation}
The $\leftarrow$ direction reflects that $h$ is a $j,J$-function.
The $\rightarrow$ direction follows from elementary reasoning in $U$
using the translation of 
basic arithmetical facts provable in $V$. So, if $x\neq y$, say 
$x < y$, then $x + (z+1) =y$ whence
$h (x) \translated{+}{j} h(z+1) \translated{=}{j} h(y)$ which implies
$h (x) \translated{\neq}{j} h(y)$.

We are now to see that for our $U$-cut $J$ and for our $j,J$-function $h$ 
we indeed have that\footnote{We use 
$h (\vec{x} )$ as short for $h(x_0), \cdots, h (x_n)$.} 
\[
\forall^{\Delta_0} \varphi \
\Box_U \forall \, \vec{x}{\in} J\ (\translated{\varphi}{j} 
(h (\vec{x})) \leftrightarrow \varphi (\vec{x})) .
\]

First we shall proof this using a seemingly $\Sigma_1$-induction. A closer 
inspection of the proof shall show that we can 
provide at all places sufficiently small bounds, so that actually an 
$\omeja{x}{}$-induction suffices.
We first proof the following claim.

\begin{claim}\label{clai:termpudlak}
$\forall^{\formal{Term}} t\ 
\Box_U \forall \, \vec{x}, y\in J\ (\translated{t}{j} 
(h (\vec{x})) 
\translated{=}{j} h (y) \leftrightarrow t (\vec{x}) =y)$
\end{claim}

\begin{proof}
The proof is by induction on $t$. The basis is trivial. 
To see for example 
\[
\Box_U \forall \, y{\in} J \ 
(\translated{0}{j} \translated{=}{j} h (y)
\leftrightarrow 0=y)
\]
we reason in $U$ as follows. By the definition of
$h$, we have that $h(0) \translated{=}{j} \translated{0}{j}$, and by 
(\ref{equa:hbijection}) we moreover see that 
$\translated{0}{j} \translated{=}{j} h (y)
\leftrightarrow 0=y$. The other base case, that is, when $t$ is an atom, is 
precisely (\ref{equa:hbijection}).

For the induction step, we shall only do $+$, as 
$\cdot$ goes almost completely the same. Thus, we assume that
$t (\vec{x}) = t_1 (\vec{x} ) + t_2 (\vec{x} )$ and set out to prove
\[
\Box_U \forall \, \vec{x}, y {\in}J\ 
(\translated{t_1}{j}(h(\vec{x})) \translated{+}{j} 
\translated{t_2}{j}(h(\vec{x}))
\translated{=}{j} h (y)
\leftrightarrow
t_1 (\vec{x} ) + t_2 (\vec{x})=y) .
\]

Within $U$:
\begin{itemize}
\item[$\leftarrow$]
If $t_1 (\vec{x} ) + t_2 (\vec{x})=y$, then by 
Lemma \ref{lemm:cutsclosedunderterms}, we can find $y_1$ and 
$y_2$ with 
$t_1(\vec{x})= y_1$ and $t_2(\vec{x})= y_2$. The induction hypothesis tells us
that 
$\translated{t_1}{j}(h (\vec{x})) \translated{=}{j}
h(y_1)$
and 
$\translated{t_2}{j}(h (\vec{x})) \translated{=}{j}
h(y_2)$.
Now by (\ref{equa:hbijection}), $h(y_1 + y_2)
\translated{=}{j}h(y)$ and by the definition of $h$ we get that
\[
\begin{array}{lll}
h(y_1 + y_2) &\translated{=}{j} & h(y_1) \translated{+}{j} h (y_2) \\
 & \translated{=}{j}_{\sf i.h.} & \translated{t_1}{j}(h (\vec{x}))\translated{+}{j} 
\translated{t_2}{j}(h (\vec{x}))\\
  & \translated{=}{j} &
\translated{(t_1(h(\vec{x})) + t_2 (h (\vec{x})))}{j}.
\end{array}
\]

\item[$\rightarrow$]
Suppose now
$\translated{t_1}{j}(h (\vec{x}))\translated{+}{j} 
\translated{t_2}{j}(h (\vec{x})) \translated{=}{j} h(y)$.
Then clearly $\translated{t_1}{j}(h (\vec{x})) \translated{\leq}{j} h(y)$
whence by the definition of $h$ we can find some $y_1\leq y$ such that
$\translated{t_1}{j}(h (\vec{x}))$\translated{=}{j} $h(y_1)$ and 
likewise for $t_2$ (using the translation of the 
commutativity of addition). The induction hypothesis now yields 
$t_1 (\vec{x}) = y_1$ and $t_2 (\vec{x}) = y_2$. By the definition 
of $h$, we get\\
$h (y) \translated{=}{j} h(y_1) \translated{+}{j} h (y_2) 
\translated{=}{j} h (y_1 + y_2)$, whence by (\ref{equa:hbijection}),
$y_1 + y_2 =y$, that is, 
$t_1(\vec{x}) + t_2 (\vec{x})=y$.
\end{itemize}
\end{proof}

We now prove by induction on $\varphi \in \Delta_0$ that 
\begin{equation}\label{equa:inducsie}
\Box_U \forall \, \vec{x}{\in} J\ (\translated{\varphi}{j} 
(h (\vec{x})) \leftrightarrow \varphi (\vec{x})).
\end{equation}

For the base case, we consider that 
$\varphi \equiv t_1(\vec{x}) + t_2 (\vec{x})$.
We can now use Lemma \ref{lemm:cutsclosedunderterms} to note that
\[
\Box_U \forall \, \vec{x} {\in }J\ 
(t_1 (\vec{x}) =t_2(\vec{x}) \leftrightarrow 
\exists \, y{\in}J \ (t_1(\vec{x}) = y \wedge t_2
(\vec{x}) = y))
\]
and then use Claim \ref{clai:termpudlak}, 
the 
transitivity of $=$ 
and its translation
to obtain the result.

The boolean connectives are really trivial, so we only need to consider 
bounded quantification. We show (still within $U$) that
\[
\forall \, y, \vec{z}{\in} J\ 
(\forall \, x{\translated{\leq}{j}} h(y)\ 
\translated{\varphi}{j} (x, h (\vec{z})) \leftrightarrow
\forall \, x{\leq} y \ \varphi (x, \vec{z})) .
\]

$\leftarrow$ Assume $\forall \, x {\leq} y \ \varphi (x, \vec{z})$ for 
some $y, \vec{z} \in J$. We are to show \\
$\forall \, x{\translated{\leq}{j}} h(y)\ 
\translated{\varphi}{j} (x, h (\vec{z}))$. Now, pick some 
$x \translated{\leq}{j} h(y)$ (the translation of the universal quantifier
actually gives us an additional $\delta (x)$ which we shall omit for the sake
of readability). Now by the definition of $h$ we find some $y'\leq y$
such that $h (y') =x$. As $y' \leq y$, by our assumption,
$\varphi (y', \vec{z})$ whence by the induction hypothesis
$\translated{\varphi}{j} (h (y'), h (\vec{z}))$, that is 
$\translated{\varphi}{j} ( x, h (\vec{z}))$. As $x$ was arbitrarily
$\translated{\leq}{j} h (y)$, we are done.

$\rightarrow$ Suppose 
$\forall \, x{\translated{\leq}{j}} h(y)\ 
\translated{\varphi}{j} (x, h (\vec{z}))$. We are to see that 
$\forall \, x{\leq} y \ \varphi (x, \vec{z}))$. So, pick
$x\leq y$ arbitrarily. Clearly $h(x)\translated{\leq}{j} h(y)$, whence,
by our assumption\\ 
$\translated{\varphi}{j} (h(x), h (\vec{z}))$ and by the induction 
hypothesis, $\varphi (x, \vec{z})$.
\medskip

Note that in our proof we have used twice a $\Sigma_1$-induction;
In Claim \ref{clai:termpudlak} and in proving (\ref{equa:inducsie}). Let us now see that we can dispense with the $\Sigma_1$ induction.

In both cases, at every induction step, a constant piece $p'$ of proof is added to
the total proof. This piece looks every time the same. Only some parameters in 
it have to be replaced by subterms of $t$. So, the addition to the total proof
can be estimated by $p'_a(t)$ which is about
\bigo{t^k} for some standard $k$ and indeed, our induction was really but a bounded one. Both our inductions went over syntax and whence are available in \sonetwo.

Note that in proving (\ref{equa:inducsie}) we
dealt with the bounded quantification by appealing to the induction
hypothesis only once, followed by a generalization. So, fortunately
we did not need to apply the induction hypothesis to all $x{\leq}y$, which
would have yielded an exponential blow-up.
\end{proof}

\begin{remark}\label{rema:theoremsenough}
Pudl\'ak's lemma is valid already if we employ the notion of 
theorems interpretability rather than smooth interpretability. 
If we work with theories in the language of arithmetic, we can do even better.
In this case, axioms interpretability can suffice. In order to get this, all 
arithmetical facts whose translations were used in the proof of
Lemma  \ref{lemm:pudlak} have to be promoted to the status of axiom.
However, a close inspection of the proof shows that these facts are very 
basic and that there are not so many of them.
\end{remark}

If $j$ is an interpretation with $j:\alpha \rhd \beta$, we shall 
sometimes call the corresponding 
isomorphic cut that is given by Lemma \ref{lemm:pudlak}, the
\index{Pudl\'ak cut}\emph{Pudl\'ak} cut of $j$ and denote it by the 
corresponding upper case letter $J$.


\subsection{A consequence of Pudl\'ak's Lemma}

The following consequence of Pudl\'ak's Lemma is simple, yet can be very useful. For simplicity we state the consequence for sentential extensions of some base theory $T$ extending \sonetwo. Thus, $\alpha \rhd \beta$ will be short for $(T+\alpha) \rhd (T+\beta)$.

\begin{lemma}\label{theorem:KeepWitnessSmall}
(In \sonetwo:) If $j: \alpha \rhd \beta$ then, for every $T+\beta$ cut $I$ there exists a $T+\alpha$ cut $J$ such that for every $\gamma$ we have that 
\[
j: \alpha \wedge \Box^J \gamma \rhd \beta \wedge \Box^I \gamma .
\] 
\end{lemma}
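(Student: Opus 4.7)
The plan is to shorten the Pudl\'ak cut associated to $j$ to the preimage under the Pudl\'ak embedding of the given cut $I$.

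First I would apply Pudl\'ak's Lemma (Lemma \ref{lemm:pudlak}) to $j:(T{+}\alpha)\rhd(T{+}\beta)$, obtaining a $(T{+}\alpha)$-cut $J_{0}$ and a $j,J_{0}$-function $h$ such that, for every $\Delta_{0}$ formula $\varphi$ and all $\vec x\in J_{0}$,
\[
(T{+}\alpha)\vdash \varphi^{j}(h(\vec x))\leftrightarrow \varphi(\vec x).
\]
Next, I would introduce the initial segment $J'(x)\equiv J_{0}(x)\wedge I^{j}(h(x))$. Since $I$ is a $(T{+}\beta)$-cut, the $j$-translations of its cut axioms tell us, provably in $T{+}\alpha$, that $I^{j}$ is closed under $0^{j}$, $\translated{+}{j}$, $\translated{\cdot}{j}$ and $\omega_{1}^{j}$; combined with the fact from Pudl\'ak's Lemma that $h$ commutes with $+$ and $\cdot$ and the successor, this shows that $J'$ contains $0$, is closed under successor, and (by monotonicity of $h$ derived from its additivity) is downward closed under $\leq$. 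Solovay's shortening then contracts $J'$ to a $(T{+}\alpha)$-cut $J\subseteq J'$ satisfying all the closure properties of Definition \ref{definition:definableCut}.

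To verify $j:\alpha\wedge\Box^{J}\gamma\rhd\beta\wedge\Box^{I}\gamma$, I would reason inside $T{+}\alpha{+}\Box^{J}\gamma$. The $j$-translation of the axioms of $T{+}\beta$ is immediate from $j:(T{+}\alpha)\rhd(T{+}\beta)$, so only $(\Box^{I}\gamma)^{j}$ remains to be obtained. Take $p\in J$ with $\bewijs{T}{p,\gamma}$; since $J\subseteq J'$ we have both $p\in J_{0}$ and $I^{j}(h(p))$. Because $\bewijs{T}{p,\varphi}$ is $\Delta_{1}^{b}$ and therefore $\Delta_{0}$, Pudl\'ak's commutation applied to the formula $\bewijs{T}{p,\overline{\lceil\gamma\rceil}}$ yields that the $j$-translate holds at $h(p)$; iterated use of the successor clause in the definition of $h$ gives $h(\overline{\lceil\gamma\rceil})\translated{=}{j}\overline{\lceil\gamma\rceil}^{j}$, so $h(p)\in I^{j}$ is witnessed as a $T$-proof of $\gamma$ in the $j$-translated world, i.e.\ $(\Box^{I}\gamma)^{j}$ holds.

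The step I expect to be the most delicate is the closure bookkeeping: one has to check that the closure properties of $I$ on the $(T{+}\beta)$-side -- in particular the closure under $\omega_{1}$ -- transfer through $h$ into the corresponding closure on the $(T{+}\alpha)$-side, so that Solovay shortening really delivers a full-fledged cut $J\subseteq J'$ and not merely an initial segment. The remainder of the argument is routine in \sonetwo, with all bounds supplied by Pudl\'ak's Lemma and by the assumed poly-time coding that makes $\bewijs{T}{p,\varphi}$ a $\Delta_{0}$ predicate.
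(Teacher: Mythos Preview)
Your approach is correct but takes a different route from the paper's. The paper does not invoke Pudl\'ak's Lemma as a black box; instead it reopens the construction of Lemma~\ref{lemm:pudlak} and inserts the extra clause $\forall\, i{<}x\ I^{j}(\sigma_{i})$ directly into the definition of \formal{Goodsequence}, so that the resulting embedding $h$ lands inside $I^{j}$ by construction, and the cut $J$ is then built from this modified $H$ exactly as in the original proof. Your route is more modular: apply Lemma~\ref{lemm:pudlak} once to obtain $J_{0}$ and $h$, pull $I$ back along $h$ via $J'(x):=J_{0}(x)\wedge I^{j}(h(x))$, and Solovay-shorten. Both arrive at the same place; your version treats Pudl\'ak's Lemma as a reusable component, while the paper's keeps everything self-contained at the level of \formal{Goodsequence}. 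Your explicit verification of $(\Box^{I}\gamma)^{j}$ via $\Delta_{0}$-commutation on $\bewijs{T}{p,\overline{\ulcorner\gamma\urcorner}}$ is in fact more detailed than the paper, which stops after setting up the modified cut and leaves that step tacit. One small correction to your closing paragraph: the worry about transferring $\omega_{1}$-closure through $h$ is misplaced. Solovay shortening only needs $J'$ to contain $0$, be closed under successor, and be downward closed---all of which you already have from the cut axioms of $I$ together with the successor and order commutation of $h$---and it then manufactures closure under $+$, $\cdot$, and $\omega_{1}$ inside $J'$ automatically; $J'$ itself need not be closed under $\omega_{1}$.
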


\begin{proof}
By a minor adaptation of the standard argument. First, we define ${\sf Goodsequence}$. 
\[
\begin{array}{lll}
{\sf Goodsequence}(\sigma , x, y) 
& := 
& {\sf lh}(\sigma) = x+1 \wedge \sigma_0 {=}^{j} {0}^{j}
\wedge \sigma_x {=}^{j} y \\
\ & \ &
\wedge \ 
\forall \, i{<}x\ (\sigma_{i+1} {=}^{j} \sigma_i
{+}^{j} {1}^{j})\\
\ & \ &
\wedge \ \forall \, k{+}l {\leq} x \ 
(\sigma_k {+}^{j} \sigma_l {=}^{j} \sigma_{k+l})\\
\ & \ &
\wedge \ \forall \, k{\cdot}l {\leq} x \ 
(\sigma_k {\cdot}^{j} \sigma_l {=}^{j} \sigma_{k\cdot l})\\
\ &\ &
\wedge \ \forall a \ (a {\leq}^{j} y \rightarrow 
\exists \, i{\leq}x\ \sigma_i {=}^{j} a)\\
\ & \  &
\wedge \ 
\forall \, i{<}x \ I^j(\sigma_i)
\end{array}
\]
Next, we define
\[
\begin{array}{lll}
H(x,y) & := &
\exists \sigma \ {\sf Goodsequence}(\sigma , x, y)\\ 
\ &\ &
\wedge \
\forall \sigma' \, \forall y' \ 
({\sf Goodsequence} (\sigma' , x,y' ) \rightarrow 
y {=}^{j} y'),
\end{array}
\]
and
\[
J'(x) := \forall \, x'{\leq}x \, \exists y \ H(x',y ).
\]
Finally, we define $J$ to be the closure of $J'$ under $+$, $\cdot$ and
$\omega_1(x)$. 

As before, one can see $H(x,y)$ as defining a function (modulo $=^j$), call it $h$, that defines an isomorphism between $J$ and the image of $J$. Moreover, in the definition of $\sf Goodsequence$ we demanded that the image of $h$ is a subset of $I$ in the clause $\forall \, i{<}x \ I^j(\sigma_i)$.

It is easy to see that $J'$ is closed under successor, that is, 
$J'(x)\to J'(x+1)$. We only comment on the new ingredient of the image of $h$ being a subset of $I$. However, $T+\beta \vdash I(x) \to I(x+1)$, as $I$ is a definable cut. As $j: T + \alpha \rhd T + \beta$, clearly $T+\alpha \vdash I^j(x) \to I^j(x+^j1^j)$ and indeed $J'$ is closed under successor.

\end{proof}

In the literature, Lemma \ref{theorem:KeepWitnessSmall} was known only for $I$ to be the trivial cut of all numbers defined by $x=x$.

\section{The \Index{Orey-H\'ajek characterizations}}\label{subs:oh1}

This final section contains the most substantial part of the paper. 
We consider the diagram from Figure \ref{pict:oh}.
It is well known that all the implications hold when both $U$ and
$V$ are reflexive. This fact is referred to as the Orey-H\'ajek
characterizations (\cite{Fef60}, \cite{orey61}, \cite{haje:inte71}, 
\cite{haje:inte72}) for \inty. However, for the $\Pi_1$-conservativity
part, we should also mention work by Guaspari, Lindstr\"om and Pudl\'ak
(\cite{Gua79}, \cite{lind79}, \cite{Lin84}, \cite{pudl85}).

In this section we shall comment on all the implications 
in Figure \ref{pict:oh}, and study 
the conditions on $U$, $V$ and the meta-theory, that are 
necessary or sufficient.

\begin{figure}
\begin{center}
\input{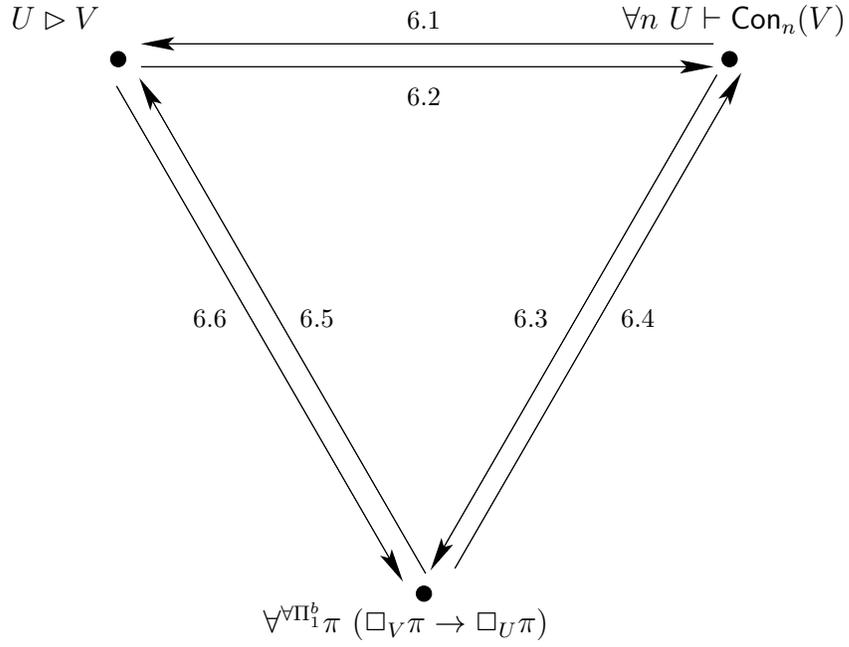}
\end{center}
\caption{Characterizations of interpretability. The labels at the arrows are references to where in the paper this arrow is proven and what the conditions are for the arrow to hold. Moreover, we will discuss which conditions should hold for the base theory so that the implications become formalizable.}\label{pict:oh}
\end{figure}

\begin{lemma}\label{lemm:OH1}
In \sonetwo we can prove
$\forall n \ \Box_{U} \restcons{V}{n} \rightarrow U \rhd V$.
\end{lemma}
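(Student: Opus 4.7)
The plan is to invoke the cut-version of the formalized Henkin construction, namely Theorem \ref{theo:kuthenkin}, which reduces the task to exhibiting a $U$-cut $I$ with $\Box_U \cutcons{V}{I}$; then $k:U \rhd V$ comes for free. So I would reason in \sonetwo under the hypothesis $\forall n\, \Box_U \restcons{V}{n}$ and set out to construct such a cut.

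The first ingredient is a small bit of bookkeeping about proofs. By the coding conventions of Section \ref{subs:coding} and the assumed properties of $\rho$, if $p$ is any (code of a) $V$-proof, then each axiom occurring in $p$ has G\"odel number $\leq p$ and each formula in $p$ has $\rho$-complexity $\leq p$. Hence \sonetwo\ proves $\bewijs{V}{p,\bot} \to \bewijs{V,p}{p,\bot}$, that is, $\bewijs{V}{p,\bot} \to \neg\restcons{V}{p}$. Consequently, for any $U$-cut $I$,
\[
U \vdash \forall p{\in}I\, \restcons{V}{p} \;\to\; U \vdash \cutcons{V}{I}.
\]
So my task collapses to: find a $U$-cut $I$ with $\Box_U \forall x{\in}I\, \restcons{V}{x}$.

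For the cut itself I would use a Solovay-style shortening. Starting from the formula $\sigma(x) := \Box_{U,\omeja{x}{k}} \restcons{V}{x}$ (for a standard $k$ to be chosen), the hypothesis guarantees every instance is true in \sonetwo, and \sonetwo-provably so, via Lemma \ref{lemm:boundedsigmacompleteness} applied to the $\exists\Sigma^b_1$ predicate $\Box_U \restcons{V}{x}$. Using the standard shortening trick (intersecting with a $U$-cut and closing under $+$, $\cdot$, $\omega_1$), I would pass to a $U$-cut $I$ on which $U$ actually proves $\restcons{V}{x}$ uniformly, using that for $x\in I$ the short $U$-proof witnessed by $\sigma(x)$ can be turned into a genuine $U$-derivation of $\restcons{V}{x}$ by an $\exists\Sigma^b_1$-completeness argument inside $U$. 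Combining with the first ingredient gives $\Box_U \cutcons{V}{I}$ and hence $k:U\rhd V$ by Theorem \ref{theo:kuthenkin}.

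The main obstacle is precisely the Solovay shortening step: turning the external fact ``for every $n$, $U$ proves $\restcons{V}{n}$'' into the internal statement that $U$ proves $\forall x{\in} I\, \restcons{V}{x}$ for some definable cut $I$, without appealing to any reflection principle for $\Pi_1$-sentences (which is not available in this weak setting). The trick is that we only need the consistency on a cut, so the cut can be chosen ``small enough'' that the provability of each restricted consistency statement at level $x\in I$ is witnessed internally by a proof whose size is itself bounded within the cut, leveraging that $I$ is closed under $\omega_1$.
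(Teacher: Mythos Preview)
Your proposal has a genuine gap at the decisive step, and the paper's argument takes a different route precisely to avoid it.

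The problem is your $\sigma(x) := \Box_{U,\omeja{x}{k}}\restcons{V}{x}$. The hypothesis $\forall n\,\Box_U\restcons{V}{n}$ gives, for each $n$, \emph{some} $U$-proof of $\restcons{V}{n}$, with no bound whatsoever on its size. Applying Lemma~\ref{lemm:boundedsigmacompleteness} to the $\exists\Sigma^b_1$ predicate $\Box_U\restcons{V}{x}$ yields a short $U$-proof of $\Box_U\restcons{V}{\dot x}$, not of $\restcons{V}{x}$ itself---one box too many---so there is no reason $\sigma(x)$ should hold. And even granting $\sigma$, the move ``turn the short $U$-proof into a genuine $U$-derivation of $\restcons{V}{x}$ inside $U$'' is exactly a local soundness/reflection step for $U$, which you correctly flag as unavailable; having the witnessing proof lie inside the cut does nothing to change that. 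In short, there is no way to convert the external schema $\forall n\,\Box_U\restcons{V}{n}$ into a single $U$-cut $I$ with $\Box_U\,\forall x{\in}I\,\restcons{V}{x}$ using only $\exists\Sigma^b_1$-completeness. (Indeed, the obvious candidate ``$I(x):= \restcons{V}{x}$'' need not be $U$-provably closed under successor, so Theorem~\ref{theo:kuthenkin} cannot be invoked on it.)

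The paper's formal proof does not attempt to manufacture such a cut externally. Instead it reasons \emph{inside $U$} and performs an induction-free case split: either $\restcons{V}{\cdot}$ is progressive, in which case a cut $J$ with $J(x)\to\restcons{V}{x}$ is built internally and a Henkin construction is run relative to the nonstandard predicate $\Box^*_W\varphi := \exists\,x{\in}J\,\Box_{W,x}\varphi$; or there is a break point $b$ with $\restcons{V}{b}\wedge\Box_{V,b+1}\bot$, and one runs a \emph{truncated} Henkin construction up to $b$ (note $\{x:x\leq b\}$ is not a cut, so this case genuinely falls outside the scope of Theorem~\ref{theo:kuthenkin}). The resulting translation $k$ is a single definable object with the case split baked in. The hypothesis $\forall n\,\Box_U\restcons{V}{n}$ enters only at the very end, and only via $\exists\Sigma^b_1$-completeness: from $\Box_{V,x}\varphi$ one obtains $\Box_U(\restcons{V}{x}\wedge\Box_{V,x}\varphi)$, hence $\Box_U\Box^*_V\varphi$, hence $\Box_U\varphi^k$. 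No reflection is used anywhere.
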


\begin{proof}
The only requirement for this implication to hold, is that
$U \vdash \cons{\pred}$. But, 
by our assumptions on $U$ and by Lemma \ref{lemm:conpredicate}
this is automatically satisfied.

Let us first give the informal proof. Thus, let \axioms{V}{x} be the formula
that defines the axiom set of $V$.

We now apply a trick due to Feferman and consider the theory $V'$
that consists of those axioms of $V$ up to which we have evidence for 
their
consistency. 
Thus, $\axioms{V'}{x} \eqbydef \axioms{V}{x} \wedge \restcons{V}{x}$.

We shall now prove that 
$U\rhd V$ in two steps. First, we will see that 
\begin{equation}\label{equa:UprovesconV'}
U\vdash \cons{V'}.
\end{equation}
Thus, by Theorem \ref{theo:informalhenkin} we get that $U\rhd V'$. 
Second, we shall see that 
\begin{equation}\label{equa:visvaccent}
V=V'.
\end{equation}

To see (\ref{equa:UprovesconV'}), 
we reason in $U$, and assume for a contradiction that \bewijs{V'}{p,\bot}
for some proof $p$. We consider the largest axiom $v$ that occurs in $p$.
By assumption we have (in $U$) that \axioms{V'}{v} whence 
\restcons{V}{v}.
But, as clearly $V' \subseteq V$, we 
see that
$p$ is also a $V$-proof.
We can now obtain a cut-free proof $p'$ of $\bot$. Clearly
\bewijs{\restrict{V}{v}}{p',\bot} and we
have our contradiction. 

If $V'$ is empty, we cannot consider $v$. But in this case,
$\cons{V'}\leftrightarrow \cons{\pred}$, and by assumption,
$U\vdash \cons{\pred}$.

We shall now see (\ref{equa:visvaccent}). 
Clearly $\nat \models \axioms{V'}{v} \rightarrow \axioms{V}{v}$ for any
$v\in \nat$. To see that the converse also holds, we reason as follows.

Suppose $\nat \models \axioms{V}{v}$.
By assumption $U\vdash \restcons{V}{v}$, whence
\restcons{V}{v} holds on any model $\cal M$ of $U$.
We now \index{end-extension}observe 
that \nat is an \Index{initial segment} of (the numbers of) any 
model $\cal M$ of $U$, that is,
\begin{equation}\label{equa:initieel}
\nat \initialsegment {\cal M} .
\end{equation}
As ${\cal M}\models \restcons{V}{v}$ and as \restcons{V}{v} is a 
$\Pi_1$-sentence, we see that also 
$\nat \models \restcons{V}{v}$. By assumption we had 
$\nat \models \axioms{V}{v}$, thus we get that 
$\nat \models \axioms{V'}{v}$. We conclude that
\begin{equation}\label{equa:V=V'}
\nat \models \axioms{V}{x} \leftrightarrow \axioms{V'}{x}
\end{equation}
whence, that $V=V'$. As $U \vdash \cons{V'}$, we get by Theorem 
\ref{theo:informalhenkin} that $U\rhd V'$. We may thus infer the 
required $U\rhd V$.

\medskip

It is not possible to directly formalize the informal proof.
At (\ref{equa:V=V'}) we concluded that $V=V'$. This actually uses some form 
of $\Pi_1$-reflection which is manifested in (\ref{equa:initieel}). The lack
of reflection in the formal environment will be compensated by another sort of 
reflection, as formulated in Theorem \ref{theo:formalhenkin}.

Moreover, to see (\ref{equa:UprovesconV'}), we had to use a cut 
elimination. To avoid this, we shall need a sharper version of 
Feferman's trick. 

Let us now start with the formal proof sketch and refer to \cite{Vis91} for more details. 
We shall reason in $U$. Without any induction
we conclude 
$\forall x\ ({\sf Con}_x (V) \rightarrow {\sf Con}_{x+1}(V))$
or
$\exists x\ ({\sf Con}_x(V)\wedge \Box_{V,x+1}\bot)$. In both cases we
shall sketch a \Index{Henkin construction}.
\medskip

If 
$\forall x\ ({\sf Con}_x (V) \rightarrow {\sf Con}_{x+1}(V))$
and also ${\sf Con }_0(V)$, we can find a cut $J(x)$ with
$J(x)\rightarrow {\sf Con}_x(V)$.
We now consider the following non-standard proof predicate.
\[
\Box^\ast_W \varphi := \exists\, x{\in}J\ \Box_{W,x}\varphi
\]
We note that we have ${\sf Con}^\ast(V)$, where 
${\sf Con}^\ast (V)$ of course denotes 
$\neg (\exists \, x{\in}J \ \Box_{V,x}\bot)$.
As always, we extend the language on $J$ by adding witnesses and define
a series of theories in the usual way. That is, by adding more and more sentences 
(in $J$) to our theories while staying consistent (in our non-standard sense).  
\begin{equation}\label{equa:goinghenkin}
V=V_0 \subseteq V_1 \subseteq V_2 \subseteq \cdots 
\mbox{with ${\sf Con}^\ast (V_i)$}
\end{equation}
We note that
$\Box^\ast_{V_i}\varphi$ and 
$\Box^\ast_{V_i}\neg \varphi$ is not possible, and that for
$\varphi \in J$ we can not have ${\sf Con}^*(\varphi \wedge
\neg \varphi)$. These observations seem to be too trivial to make, but
actually many a non-standard proof predicate encountered in the
literature does prove the consistency of inconsistent theories.

As always, the sequence (\ref{equa:goinghenkin})
defines a cut $I\subseteq J$, that induces a Henkin set $W$
and we can relate our required interpretation $k$ to this 
Henkin set as was, for example, done in \cite{Vis91}.
\medskip

We now consider the case that for some fixed $b$ we have
${\sf Con }_b(V)\wedge \Box_{V,b+1}\bot$. We note that we can see the
uniqueness of this $b$ without using any substantial induction.
Basically, we shall now do the same construction as before only that
we now possibly stop at $b$. 

For example the cut $J(x)$ will now be replaced by $x\leq b$.
Thus, we may end up with a truncated 
Henkin set $W$. But this set is complete with respect to relatively 
small formulas.
Moreover, $W$ is certainly closed under subformulas and substitution 
of witnesses. Thus, $W$ is sufficiently large to define the 
required interpretation $k$.
\medskip

In both cases we can perform the
following reasoning.
\[
\begin{array}{llll}
\Box_V \varphi  & \rightarrow & \exists x\ \Box_{V,x} \varphi\\   
  & \rightarrow & \exists x\ \Box_U ({\sf Con}_x(V) \wedge \Box_{V,x} \varphi)\\   
  & \rightarrow &  \Box_U \Box^\ast_V \varphi\\   
  & \rightarrow &  \Box_U \varphi^k & \mbox{by Theorem \ref{theo:formalhenkin}.}\\   
\end{array}
\]
The remarks from \cite{Vis91} on the bounds of our proofs are 
still applicable and we thus obtain a smooth interpretation.
\end{proof}

\begin{lemma}\label{lemm:OH2}
In the presence of $\expo$, we can prove that for reflexive $U$,
$U\rhd V \rightarrow \forall x\ \Box_U \restcons{V}{x}$.
\end{lemma}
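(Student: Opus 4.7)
The plan is to fix $x$ and construct, uniformly, a $U$-proof of $\restcons{V}{x}$ by deriving a contradiction inside $U$ from the hypothesis $\neg \restcons{V}{x}$, using reflexivity of $U$ as the contradiction target. The interpretation $j$ is used to turn a hypothetical $\restrict{V}{x}$-proof of $\bot$ into a $\restrict{U}{n}$-proof of $\bot$ for some $n$ whose value is determined uniformly from $x$ and $j$.

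First I would exploit smoothness of $j: U\rhd V$ (which, by Theorem \ref{theo:intynotions}, we may assume throughout) to obtain a single $y$ such that for every axiom $v\leq x$ of $V$ there is a $U$-proof $p_v\leq y$ of $\translated{v}{j}$. By inspection in the style of Remark \ref{rema:sigma}, these witnessing proofs can be taken to additionally satisfy $\rho(p_v)\leq n_0$ for a uniform bound $n_0$. Next, by Remark \ref{rema:translation}, the syntactic translation sends any $\restrict{V}{x}$-proof $p$ of a sentence $\varphi$ to a proof $\translated{p}{j}$ of $\translated{\varphi}{j}$ whose axioms and formulas all have $\rho$-measure bounded by some $f(x,j)$. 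Set $n \eqbydef \max(n_0, f(x,j))$. Reflexivity of $U$ then gives us $U \vdash \restcons{U}{n}$, which will be our contradiction target.

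Now I would formalize the following reasoning inside $U$. Assume for contradiction that there is some $p$ with $\bewijs{V,x}{p,\bot}$. Apply Lemma \ref{lemm:translatedproofs} to form $\translated{p}{j}$, which is a $U$-proof of $\bot$ from the hypotheses $\{\translated{v}{j} : v\leq x \text{ and } v \text{ occurs as a } V\text{-axiom in } p\}$. For each such $v$, splice the fixed $p_v$ in place of the corresponding hypothesis $\translated{v}{j}$. The spliced proof $q$ is a genuine $U$-proof of $\bot$, and its $U$-axioms come either from $\translated{p}{j}$ (bounded by $f(x,j)$) or from some $p_v$ (bounded by $n_0$), with the same bookkeeping for $\rho$-measure; hence $\bewijs{U,n}{q,\bot}$, contradicting $\restcons{U}{n}$. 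Discharging the assumption inside $U$ yields $U \vdash \restcons{V}{x}$, as required.

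The role of $\expo$ is to guarantee that the combined proof $q$ — whose code can be superpolynomial in $|p|$ because the splicing multiplies sizes by factors of order $y$ — has a well-defined representation in the meta-theory, and to allow the uniform dependence of $n$ on $x$ to be expressed as a total function. The main technical obstacle is the complexity bookkeeping: one must verify that the $\rho$-measure is preserved (up to a standard additive constant) by both proof-translation and axiom-substitution, so that every formula appearing in $q$ really has $\rho \leq n$. This is precisely what Remark \ref{rema:translation} delivers for the translation step, and what the choice of $n_0$ handles for the splicing step; the rest of the argument is routine combinatorics on proof trees.
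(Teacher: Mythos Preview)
Your overall strategy matches the paper's exactly: translate a hypothetical $\restrict{V}{x}$-proof of $\bot$ via $j$, splice in $U$-proofs of the translated axioms, obtain a bounded-complexity $U$-proof of $\bot$, and contradict reflexivity. However, there is a genuine gap, and it lies precisely where $\expo$ actually enters.

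You obtain the witnessing proofs $p_v\leq y$ in the meta-theory via smoothness, and then write ``formalize the following reasoning inside $U$ \ldots\ splice the fixed $p_v$''. But $U$ has no access to these meta-theoretic objects; the hypothetical $p$ lives inside $U$, and for each $v$ occurring in it $U$ must itself be able to \emph{find} a suitable $p_v$. The paper makes this transfer explicit: from the true statement
\[
\exists n\,\forall\, u{\leq}x\,\exists\, p{\leq}n\ (\axioms{V}{u}\rightarrow\bewijs{U,l}{p,\translated{u}{k}})
\]
one concludes via provable completeness that $U$ proves it (the paper's (\ref{equa:firstsigmavoll})), and only then can $U$ carry out the bounded search and the splicing. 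The point is that the matrix $\forall\, u{\leq}x\,\exists\, p{\leq}n\,(\ldots)$ is merely $\Pi_2^b$ (the outer bounded universal is not sharply bounded), so the whole sentence is $\exists\Pi_2^b$ rather than $\exists\Sigma_1^b$; and provable $\exists\Pi_2^b$-completeness is exactly what requires $\expo$.

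Your stated reason for $\expo$---that the spliced proof $q$ might be too large---is not the issue. By the estimate in the proof of Theorem~\ref{theo:intynotions} (the clause $\rhd_{sa}\Rightarrow\rhd_{st}$), the splicing yields $q\leq(\translated{p}{j})^{|y|}$, which is already available in $\sonetwo$; all proof-transformation steps live comfortably under $\omega_1$. It is the transfer of the smoothness data into $U$, not any size blow-up, that forces $\expo$. (As a minor point, Remark~\ref{rema:sigma} does not apply to your $p_v$'s, since those are arbitrary $U$-proofs supplied by the interpretation rather than proofs manufactured by $\Sigma_1$-completeness; the complexity bound $n_0$ you want follows instead directly from $p_v\leq y$.)
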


\begin{proof}
The
informal argument is conceptually very clear and we have depicted it in Figure
\ref{pict:prooftransformations}.
\begin{figure}
\begin{center}
\input{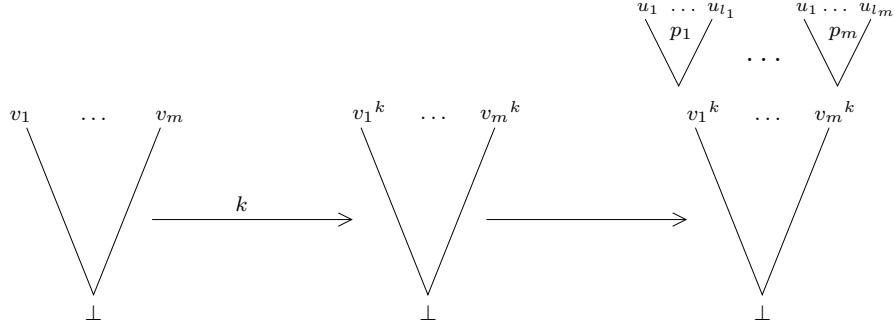}
\end{center}
\caption{Transformations on proofs}\label{pict:prooftransformations}
\end{figure}
The accompanying reasoning is as follows.

We assume $U \rhd V$, whence 
for some $k$ we have
$k : U \rhd V$. Thus, for axioms interpretability we find that
$\forall u \, \exists p \  (\axioms{V}{u} \rightarrow \bewijs{U}{p, 
\translated{u}{k}})$.
We are now to see that $\forall x \ U \vdash  
\restcons{V}{x}$. So, we fix some $x$.
By our assumption we get that for some $l$, that
\begin{equation}\label{equa:firstbesigma}
\forall \, u{\leq} x \, \exists p \  (\axioms{V}{u} \rightarrow 
\bewijs{U,l}{p, \translated{u}{k}}).
\end{equation}
This formula is actually equivalent to the $\Sigma_1$-formula 
\begin{equation}\label{equa:secondbesigma}
\exists n \, \forall \, u{\leq} x \, \exists \, p{\leq} n  \  (\axioms{V}{u} \rightarrow 
\bewijs{U,l}{p, \translated{u}{k}})
\end{equation}
from which 
we may conclude by provable $\Sigma_1$-completeness,
\begin{equation}\label{equa:firstsigmavoll}
U\vdash \exists n \, \forall \, u{\leq} x \, \exists \, p{\leq} n \  (\axioms{V}{u} 
\rightarrow \bewijs{U,l}{p, \translated{u}{k}}).
\end{equation}

We now reason in $U$ and
suppose that
there is some $V,x$-proof $p$ of $\bot$. The assumptions in $p$ are axioms
$v_1\ldots v_m$ of $V$, with each $v_i \leq x$. 
Moreover, all the formulas $\psi$ in $p$ have $\rho(\psi)\leq x$.
By Lemma \ref{lemm:translatedproofs}, this $p$ transforms
to a proof \translated{p}{k} of \translated{\bot}{k} which is again $\bot$.

The assumptions in \translated{p}{k} are now among the 
$\translated{v_1}{k}\ldots \translated{v_m}{k}$. 
By Remark \ref{rema:translation} we get that for some $n'$
depending on $x$ and $k$, we have that all the axioms in
$p^k$ are $\leq n'$ and all the $\psi$ occurring in $p^k$
have $\rho(\psi)\leq n'$.

Now by (\ref{equa:firstsigmavoll}), we  have $U,l$-proofs $p_i \leq n$ 
of $\translated{v_i}{k}$. The 
assumptions in the $p_i$ are axioms of $U$. Clearly all of these 
axioms are $\leq l$. We can now form a 
$U,l{+}n'$-proof $p'$ of $\bot$ 
by substituting all the $p_i$ for the $(v_i)^k$. Thus we have shown
\bewijs{\restrict{U}{l+n'}}{p',\bot}. But this clearly contradicts the 
reflexivity of $U$.

The informal argument is readily formalized to obtain
$T \vdash U\rhd V \rightarrow \forall x\ \Box_U \cons{\restrict{V}{x}}$. 
However there are 
some  subtleties.

First of all, to conclude that (\ref{equa:firstbesigma})
is equivalent to (\ref{equa:secondbesigma}), a genuine application 
of \collection{\Sigma_1} is needed. If $U$ lacks 
\collection{\Sigma_1}, we have to switch to smooth interpretability to still
have the implication valid. Smoothness then automatically also provides the
$l$ that we used in \ref{equa:firstbesigma}.

In addition we need that $T$ proves the 
totality of exponentiation. For weaker theories, we only have
provable $\exists \Sigma_1^b$-completeness. But if $\axioms{V}{u}$ is
$\Delta_1^b$, we can only guarantee that
$\forall \, u{\leq} m \, \exists \, p{\leq} n \  (\axioms{V}{u} 
\rightarrow \bewijs{U}{p, \translated{u}{k}})$ is $\Pi_2^b$. As far as
we know, exponentiation is needed to prove 
$\exists \Pi_2^b$-completeness.

All other transformations of objects in our proof only require the 
totality of \omeja{x}{}.
%
\end{proof}

The assumption that $U$ is reflexive can in a sense not be dispensed with.
That is, if
\begin{equation}
\forall V\ (U\rhd V \rightarrow \forall x\  \Box_U \restcons{V}{x}),
\end{equation}
then $U$ is reflexive, as clearly $U\rhd U$. In a similar way we see that if
\begin{equation}\label{equa:invokingshavru}
\forall U\ (U\rhd V \rightarrow \forall x\  \Box_U \restcons{V}{x}),
\end{equation}
then $V$ is reflexive. However, $V$ being reflexive could never be a sufficient
condition for (\ref{equa:invokingshavru}) to hold, as we know from
\cite{Sha97} that interpreting reflexive theories in finitely many axioms is
complete $\Sigma_3$.


\begin{lemma}\label{lemm:OH3}
In \sonetwo we can prove
$\forall x\ \Box_U \restcons{V}{x}\rightarrow
\forall^{\forall \Pi_1^b}\pi \ (\Box_V \pi \rightarrow \Box_U \pi)$.
\end{lemma}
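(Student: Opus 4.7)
The plan is to formalise, inside \sonetwo, the following dialectical argument. If $V\vdash \pi$ with $\pi\equiv\forall\vec y\,\pi_0(\vec y)$ and $\pi_0\in\Pi_1^b$, then a would-be counterexample $\vec y$ with $\neg\pi_0(\vec y)$ is an $\exists\Sigma_1^b$-witness, so by the formalised bounded $\Sigma_1$-completeness theorem (Lemma~\ref{lemm:boundedsigmacompleteness} together with Remark~\ref{rema:sigma}) there is a short $V$-refutation of $\pi_0(\dot{\vec y})$. Combining this with the fixed $V$-proof of $\pi$ yields a proof of $\bot$ whose complexity is bounded by a standard constant. The partial consistency statements supplied by the hypothesis then rule this out, so $\pi_0(\vec y)$ must in fact hold for all $\vec y$, giving $\Box_U\pi$.

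Concretely, I would reason in \sonetwo and proceed in three steps. First, from $\Box_V\pi$ pick a witness $p_0$ with $\bewijs{V}{p_0,\pi}$ and read off a bound $k$ such that $\bewijs{V,k}{p_0,\pi}$; since this is a true $\Delta_1^b$-fact it is in particular $U$-provable. Second, apply the formalised version of Lemma~\ref{lemm:boundedsigmacompleteness} to the $\exists\Sigma_1^b$-formula $\neg\pi_0(\vec y)$ to obtain a standard $n'$ with
\[
\sonetwo\vdash \Box_U\forall\vec y\,[\,\neg\pi_0(\vec y)\rightarrow \Box_{V,n'}\neg\pi_0(\dot{\vec y})\,].
\]
Third, set $m:=\max(k,n')+c$ for a small standard $c$ absorbing one universal instantiation and one propositional step; then work inside $U$: fix $\vec y$, and suppose for contradiction that $\neg\pi_0(\vec y)$. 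Then $\Box_{V,n'}\neg\pi_0(\dot{\vec y})$; together with $\Box_{V,k}\pi$ this chains to $\Box_{V,m}\bot$, contradicting the hypothesis instance $\Box_U\restcons{V}{m}$. Hence $\pi_0(\vec y)$, and generalising gives $U\vdash\pi$, i.e.~$\Box_U\pi$. Since the construction of this $U$-proof is uniform in $\pi$, the universal statement $\forall^{\forall\Pi_1^b}\pi\,(\Box_V\pi\rightarrow\Box_U\pi)$ follows in \sonetwo.

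The main technical obstacle will be the bookkeeping in the third step: one has to check that the combined $V$-proof of $\bot$ has both its axiom-indices and its $\rho$-complexity bounded by a constant depending only on $\pi$, not on the model-internal parameter $\vec y$. For the axioms this is immediate (no new axioms are introduced by instantiation and modus ponens). For $\rho$ one uses that efficient numerals keep the substitution $\pi_0(\overline{\vec y})$ from inflating the quantifier-alternation count, so universal instantiation of the $V,k$-proof of $\forall\vec y\,\pi_0(\vec y)$ and one step of $\bot$-introduction both add only a standard constant to $\rho$ — exactly the role of the constant $c$ above. With this estimate in hand everything else is a routine unwinding of formalised bounded $\Sigma_1$-completeness and of the hypothesis, both of which are available in \sonetwo.
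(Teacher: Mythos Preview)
Your proposal is correct and follows essentially the same route as the paper's proof: obtain a restricted $V$-proof of $\pi$, use formalised $\exists\Sigma_1^b$-completeness (Lemma~\ref{lemm:boundedsigmacompleteness} with Remark~\ref{rema:sigma}) to show inside $U$ that a counterexample would yield a restricted $V$-refutation, and invoke the appropriate instance of $\restcons{V}{m}$ to close the argument.

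The only cosmetic difference is that the paper works with the closed sentence $\neg\pi$ throughout, applying completeness directly to obtain $\Box_U(\neg\pi\rightarrow\Box_{V,x}\neg\pi)$ for a single $x$ chosen large enough to also absorb the $V$-proof of $\pi$; you instead open up $\pi$ as $\forall\vec y\,\pi_0(\vec y)$ and apply completeness to $\neg\pi_0(\vec y)$ with a free parameter, then recombine inside $U$. Your version therefore incurs the extra bookkeeping you flag (uniformity of the complexity bound $n'$ in $\vec y$, and the $\rho$-estimate for instantiation by efficient numerals), whereas the paper sidesteps this entirely by never decomposing $\pi$. Both are fine; the paper's packaging is just a bit tidier.
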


\begin{proof}
There are no conditions on $U$ and $V$ for this implication to hold.
We shall directly give the formal proof as the informal proof does not 
give a clearer picture.

Thus, we reason 
in \sonetwo and assume
$\forall x \ \Box_U \restcons{V}{x}$. Now we consider 
any $\pi \in \forall \Pi_1^b$ 
such that $\Box_V \pi$. Thus, for some $x$ we have
$\Box_{\restrict{V}{x}}\pi$.
We choose $x$ large enough, so that we also have (see Remark \ref{rema:sigma})
\begin{equation}\label{equa:Uprovessigmacompleteness}
\Box_U (\neg \pi \rightarrow \Box_{\restrict{V}{x}}\neg \pi).
\end{equation}
As $\Box_{\restrict{V}{x}} \pi \rightarrow \Box_U \Box_{\restrict{V}{x}}\pi$, we 
also have that 
\begin{equation}\label{equa:UprovesVreflexive}
\Box_U \Box_{\restrict{V}{x}} \pi.
\end{equation}
Combining (\ref{equa:Uprovessigmacompleteness}), 
(\ref{equa:UprovesVreflexive})
and the assumption that 
$\forall x \ \Box_U \restcons{V}{x}$,
we see that indeed $\Box_U \pi$.
\end{proof}


\begin{lemma}\label{lemm:OH4}
In \sonetwo we can prove that for reflexive $V$ we have
\[
\forall^{\forall \Pi_1^b}\pi \ (\Box_V \pi \rightarrow \Box_U \pi)
\rightarrow \forall x\ \Box_U \restcons{V}{x}.
\]
\end{lemma}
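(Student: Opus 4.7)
The plan is to use reflexivity of $V$ to manufacture, for each $x$, the premise $\Box_V \restcons{V}{\dot x}$ of the conservativity hypothesis, and then read off the conclusion $\Box_U \restcons{V}{\dot x}$ directly from the hypothesis. Essentially, the $\Pi_1^b$-conservativity statement is strong enough to transfer exactly the sort of sentences that reflexivity produces for free.

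Reasoning in \sonetwo, I would assume $\forall^{\forall \Pi_1^b}\pi\,(\Box_V \pi \to \Box_U \pi)$ and fix an arbitrary $x$. The first step is a syntactic check: the formula $\restcons{V}{\dot x}$ has the shape $\forall p\,\neg\bewijs{V,\dot x}{p,\bot}$, with a $\Delta_1^b$ matrix, by the standing convention that $V$ has a $\Delta_1^b$ axiomatization (so that both the proof predicate and its restriction to proofs with bounded axioms and complexities are $\Delta_1^b$, cf.\ the preliminaries). Hence $\restcons{V}{\dot x}$ is $\forall \Pi_1^b$, and the map $x \mapsto \restcons{V}{\dot x}$ (built with efficient numerals) is a poly-time operation that, provably in \sonetwo, lands among codes of $\forall \Pi_1^b$ sentences.

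The second step is to invoke reflexivity of $V$ in its formalized guise adopted in the paper, namely $\forall n\ \Box_V \restcons{V}{\dot n}$. Instantiating at $n := x$ yields $\Box_V \restcons{V}{\dot x}$. Plugging $\pi := \restcons{V}{\dot x}$ into the standing hypothesis gives $\Box_U \restcons{V}{\dot x}$, and since $x$ was arbitrary we obtain the required $\forall x\ \Box_U \restcons{V}{x}$.

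The only real obstacle is the bookkeeping in the first step: verifying, provably in \sonetwo and uniformly in $x$, that the code of $\restcons{V}{\dot x}$ is a $\forall \Pi_1^b$ code, so that the universal quantifier $\forall^{\forall \Pi_1^b}\pi$ in the hypothesis can actually be instantiated at it. With efficient numerals and the $\Delta_1^b$-axiomatization convention this is routine, and in particular neither $\expo$ nor $\collection{\Sigma_1}$ is needed; this fits with the claim that the whole implication is provable already in \sonetwo.
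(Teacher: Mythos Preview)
Your proposal is correct and follows exactly the same route as the paper: use reflexivity of $V$ to obtain $\Box_V \restcons{V}{\dot x}$, observe that each $\restcons{V}{\overline{x}}$ is a $\forall\Pi_1^b$-sentence, and apply the conservativity hypothesis. The paper's proof is a single sentence; your additional bookkeeping about the $\Delta_1^b$ matrix, efficient numerals, and why the instantiation is \sonetwo-available is sound and simply makes explicit what the paper leaves implicit.
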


\begin{proof}
If $V$ is 
reflexive and $\forall^{\forall \Pi_1^b}\pi \ (\Box_V \pi \rightarrow \Box_U \pi)$ then,
as for every $x$, \restcons{V}{\numeral{x}} 
is a $\forall \Pi_1^b$-formula,
also $\forall x\ \Box_U \restcons{V}{x}$.
\end{proof}

It is obvious that 
\begin{equation}\label{equa:Piconservativityimpliesreflexivity}
\forall U\ [\forall^{\forall \Pi_1^b}\pi \ (\Box_V \pi \rightarrow \Box_U \pi)
\rightarrow \forall x\ \Box_U \restcons{V}{x}]
\end{equation}
implies that $V$ is reflexive. Likewise,
\begin{equation}\label{equa:PiconservativityimpliesreflexivityV}
\forall V\ [\forall^{\forall \Pi_1^b}\pi \ (\Box_V \pi \rightarrow \Box_U \pi)
\rightarrow \forall x\ \Box_U \restcons{V}{x}]
\end{equation}
implies that $U$ is reflexive. However, $U$ being reflexive can never
be a sufficient condition for (\ref{equa:PiconservativityimpliesreflexivityV})
to hold. An easy counterexample is obtained by taking
$U$ to be \pra and $V$ to be \isig{1} as it is well-known that \isig{1} is provably $\Pi_2$ conservative over \pra and that \isig{1} is finitely axiomatized.

\begin{lemma}\label{lemm:OH5}
(In \sonetwo:) For reflexive $V$ we have
$\forall^{\forall \Pi_1^b} \pi\ 
(\Box_V \pi \rightarrow \Box_U \pi) \rightarrow U\rhd V$.
\end{lemma}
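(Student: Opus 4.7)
The plan is to obtain this implication by simply chaining together two results already established in the excerpt, namely Lemma \ref{lemm:OH4} and Lemma \ref{lemm:OH1}. Reasoning in \sonetwo and assuming the reflexivity of $V$, together with the hypothesis $\forall^{\forall \Pi_1^b}\pi\ (\Box_V\pi \rightarrow \Box_U\pi)$, I would first invoke Lemma \ref{lemm:OH4} to conclude $\forall x\ \Box_U \restcons{V}{x}$. This is precisely the hypothesis of Lemma \ref{lemm:OH1}, which is itself proved inside \sonetwo and yields $U\rhd V$.

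The legitimacy of this composition rests on the fact that both Lemma \ref{lemm:OH4} and Lemma \ref{lemm:OH1} are stated as provable in \sonetwo, so their conjunction is as well, and the intermediate statement $\forall x\ \Box_U \restcons{V}{x}$ fits syntactically as the conclusion of the first and the premise of the second. No additional hypothesis on $U$ is needed: Lemma \ref{lemm:OH1} requires only $U\vdash \cons{\pred}$, which holds automatically for the numberized theories we consider by Lemma \ref{lemm:conpredicate}; and the reflexivity of $V$ is used only to drive Lemma \ref{lemm:OH4}, since each $\restcons{V}{\overline{x}}$ is a $\forall\Pi_1^b$-formula and is therefore transferred from $V$ to $U$ by the assumed $\Pi_1$-conservativity.

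Since the step from $\Pi_1$-conservativity to restricted consistency and the step from restricted consistency to interpretability have already been carried out elsewhere in the paper, there is no genuine obstacle here. The only thing to check is that nothing in either lemma requires more than \sonetwo or additional assumptions on $U$; inspection of the two proofs above confirms this. Thus the composition yields the desired conclusion within \sonetwo, as claimed.
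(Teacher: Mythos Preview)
Your proposal is correct and matches the paper's own argument exactly: the paper simply notes that no direct proof is known and that one goes via Lemmata \ref{lemm:OH4} and \ref{lemm:OH1}, which is precisely the composition you carry out.
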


\begin{proof}
We know of
no direct proof of this implication. Also, all proofs in the literature 
go via Lemmata \ref{lemm:OH4} and \ref{lemm:OH1}, 
and hence use reflexivity of
$V$.
\end{proof}


In our context, the reflexivity of $V$ is not necessary, as 
$\forall U\ U \rhd \buss$ and \buss is not reflexive.

\begin{lemma}\label{lemm:OH6}
Let $U$ be a reflexive and sequential theory. We have in \sonetwo that
$U\rhd V  \rightarrow \forall^{\forall \Pi_1^b} \pi\ 
(\Box_V \pi \rightarrow \Box_U \pi)$.

If moreover $U \vdash \expo$ we also get
$U\rhd V \rightarrow \forall^{\Pi_1} \pi\ 
(\Box_V \pi \rightarrow \Box_U \pi)$.
If $U$ is not reflexive, we still have that 
$U\rhd V \rightarrow \exists^{U\mbox{-}{\sf Cut}} J \, 
\forall^{\Pi_1} \pi\ (\Box_V \pi \rightarrow \Box_U \pi^J)$.

For these implications, it is actually sufficient to work with the 
notion of theorems interpretability.
\end{lemma}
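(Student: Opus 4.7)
The argument rests on Pudl\'ak's Lemma (Lemma~\ref{lemm:pudlak}), which by Remark~\ref{rema:theoremsenough} is already available under theorems interpretability. I will handle the three claims in order of increasing strength: first the cut-relativized third statement, then lift to the unrelativized first statement via reflexivity of $U$, and finally remark on the $\Pi_1$ extension under $\expo$.

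For the third statement, assume $j:U\rhd V$ (theorems interpretability) and $\Box_V\pi$ with $\pi\equiv\forall\vec x\,\psi(\vec x)$, $\psi\in\Pi_1^b$. Theorems interpretability immediately gives $\Box_U\pi^j$, i.e.\ $\Box_U\forall\vec x\,(\delta(\vec x)\to\psi^j(\vec x))$. Pudl\'ak's Lemma supplies a $U$-cut $J$ and a $j,J$-function $h$ with $\Box_U\forall\vec x\in J\,(\psi^j(h(\vec x))\leftrightarrow\psi(\vec x))$. Reasoning in $U$: for any $\vec x\in J$ the witness $h(\vec x)$ satisfies $\delta(h(\vec x))$ by construction of $h$, so instantiating $\pi^j$ at $h(\vec x)$ and applying the Pudl\'ak equivalence yields $\psi(\vec x)$. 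Hence $\Box_U\forall\vec x\in J\,\psi(\vec x)$, which is precisely $\Box_U\pi^J$.

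For the first statement, it remains to upgrade $\Box_U\pi^J$ to $\Box_U\pi$ using reflexivity of $U$. A close inspection of the previous construction shows that the $U$-proof of $\forall\vec x\in J\,\psi(\vec x)$ has $\rho$-complexity bounded by a standard constant $n_1$, giving $\Box_{U,n_1}\pi^J$. By the formalized outside-big-inside-small (Lemma~\ref{lemm:outsidebiginsidesmall} together with Remark~\ref{rema:outsidebig}), there is a standard $n_0$ with $\sonetwo\vdash\Box_U\forall\vec x\,\Box_{U,n_0}J(\dot{\vec x})$. A routine proof-concatenation available in \sonetwo composes these two into $\sonetwo\vdash\Box_U\forall\vec x\,\Box_{U,n_2}\psi(\dot{\vec x})$ for a single standard $n_2$. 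Reflexivity of $U$ then supplies, via the weak $\forall\Pi_1^b$-reflection of Lemma~\ref{lemm:weakpireflexion}, the statement $\Box_U\forall\vec x\,(\Box_{U,n_2}\psi(\dot{\vec x})\to\psi(\vec x))$, whence $\Box_U\forall\vec x\,\psi(\vec x)=\Box_U\pi$.

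The second statement follows by the same recipe: if $U\vdash\expo$, the Pudl\'ak cut $J$ can additionally be closed under exponentiation, which extends Pudl\'ak's equivalence from $\Pi_1^b$- to general $\Pi_1$-matrices (whose classical bounded quantifiers, even over terms of exponential growth, are absorbed by $J$), and the preceding arguments go through verbatim. The main obstacle throughout is uniformity of the bounds in the free variable $\vec x$: Pudl\'ak, outside-big-inside-small, and the combining proof transformation each must yield proofs of standard $\rho$-complexity that do \emph{not} grow with $\vec x$, so that a single $n_2$ works parametrically and Lemma~\ref{lemm:weakpireflexion} can be invoked in one stroke.
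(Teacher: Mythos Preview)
Your overall architecture matches the paper's: first obtain $\Box_U\pi^J$ from Pudl\'ak's Lemma, then upgrade $\Box_U\pi^J$ to $\Box_U\pi$ using outside-big-inside-small together with the weak $\forall\Pi_1^b$-reflection of Lemma~\ref{lemm:weakpireflexion}. Two points deserve correction.

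First, the ``close inspection'' you invoke for the bound $n_1$ is unnecessary, and in fact delicate to justify in \sonetwo. The paper argues more simply: from $\Box_U\pi^J$ one gets trivially $\exists n\,\Box_{U,n}\pi^J$ (any proof has \emph{some} complexity bound), then by provable $\exists\Sigma_1^b$-completeness $\exists n\,\Box_U\Box_{U,n}\pi^J$, then $\exists n\,\Box_U\forall x\,\Box_{U,n}(x\in J\to\varphi(\dot x))$ by instantiation, and only now brings in Remark~\ref{rema:outsidebig} to discharge $J$. No inspection of the Pudl\'ak construction is needed; the $n$ may even be non-standard and the argument still goes through.

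Second, your diagnosis of where $\expo$ enters is mistaken. Pudl\'ak's Lemma as stated already delivers the equivalence for all $\Delta_0$-matrices, hence the cut-relativized conclusion $\Box_U\pi^J$ is available for arbitrary $\pi\in\Pi_1$ without any assumption on exponentiation (this is exactly what the paper proves at~(\ref{equa:blab})). The restriction to $\forall\Pi_1^b$ in the first clause, and the need for $U\vdash\expo$ in the second, arise solely in the \emph{upgrade} step: Lemma~\ref{lemm:weakpireflexion} relies on provable $\exists\Sigma_1^b$-completeness, which for a $\Delta_0$ (rather than $\Pi_1^b$) matrix $\varphi$ becomes provable $\Sigma_1$-completeness, and that in turn requires $U\vdash\expo$. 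Closing $J$ under exponentiation is irrelevant here. Incidentally, for the same reason your framing of the third clause with $\psi\in\Pi_1^b$ is too weak; the statement is for $\Pi_1$, i.e.\ $\psi\in\Delta_0$, and Pudl\'ak's Lemma covers that directly.
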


\begin{proof}
The intuition for the formal proof comes from Pudl\'ak's lemma, which in turn is tailored 
to compensate a lack of induction. We shall first give an informal proof sketch
if $U$ has full induction. Then we shall give the formal proof using Pudl\'ak's lemma.

If $U$ has full induction and $j : U\rhd V$,  we may assume by Remark
\ref{rema:equalitypreserving} assume that $j$ maps identity to identity.
Let ${\cal M}$ be an arbitrary model of $U$. By Theorem \ref{theo:initialembedding} we now see that 
${\cal M}\initialsegment \translated{{\cal M}}{j}$. If for some $\pi \in \Pi_1$,
$\Box_V \pi$ then by soundness $\translated{{\cal M}}{j}\models \pi$, whence 
${\cal M}\models \pi$. As ${\cal M}$ was an arbitrary model of $U$, we get by the completeness theorem that $\Box_U \pi$.


To transform this argument into a formal one, valid for weak theories, there
are two major adaptations to be made. 
First, the use of the soundness and completeness theorem
have to be avoided 
\negeer{(see also Remark \ref{rema:formalmodel})}. This can be done
by simply staying in the realm of provability. Secondly, we should get rid of
the use of full induction. This is done by switching to a cut in Pudl\'ak's lemma.
\medskip

Thus, the formal argument runs as follows. Reason in \sonetwo and assume $U\rhd V$.

We fix some $j: U \rhd V$. By Pudl\'ak's lemma, Lemma \ref{lemm:pudlak}, we
now find\footnote{Remark \ref{rema:theoremsenough} ensures us that
we can find them also in the case of theorems interpretability.} 
a definable $U$-cut $J$ and a $j,J$-function $h$ such that
\[
\forall^{\Delta_0} \varphi \
\Box_U \forall \, \vec{x}{\in} J\ (\translated{\varphi}{j} 
(h (\vec{x})) \leftrightarrow \varphi (\vec{x})) .
\]
We shall see that for this cut $J$ we have that
\begin{equation}\label{equa:blab}
\forall^{\Pi_1}\pi \ (\Box_V \pi \rightarrow \Box_U \pi^J).
\end{equation}
Therefore, we fix
some $\pi \in \Pi_1$ and assume $\Box_V \pi$. Let 
$\varphi (x) \in \Delta_0$ be such that 
$\pi = \forall x \ \varphi (x)$.
Thus we have $\Box_V \forall x \ \varphi (x)$, hence
by theorems interpretability
\begin{equation}\label{equa:bla}
\Box_U \forall x \ (\delta (x) \rightarrow \translated{\varphi}{j}(x)) .
\end{equation}
We are to see 
\begin{equation}\label{equa:blabla}
\Box_U \forall x\ (J(x) \rightarrow \varphi (x)).
\end{equation}
To see this, we reason in $U$ and fix $x$ such that $J(x)$. By definition
of $J$, $h (x)$ is defined. By the definition of $h$, we have 
$\delta (h(x))$, whence by (\ref{equa:bla}), 
$\translated{\varphi}{j}(h(x))$. Pudl\'ak's lemma now yields the desired 
$\varphi (x)$. As $x$ was arbitrary, we have proved (\ref{equa:blabla}).

So far, we have not used the reflexivity of $U$. We shall now see that
\[
\forall^{\forall \Pi_1^b} \pi\ (\Box_U \pi^J \rightarrow \Box_U \pi)
\]
holds for any $U$-cut $J$ whenever $U$ is reflexive. For this purpose, we 
fix some $\pi \in \forall\Pi_1^b$, some $U$-cut $J$ and assume
$\Box_U \pi^J$. Thus, 
$\exists n \ 
\Box_{\restrict{U}{n}} \pi^J$ and also
$\exists n \ \Box_U
\Box_{\restrict{U}{n}} \pi^J$.
If $\pi = \forall x \ \varphi (x)$ with $\varphi (x) \in \Pi_1^b$, we
get 
$\exists n \ \Box_U
\Box_{\restrict{U}{n}} \forall x \ (x\in J \rightarrow \varphi (x))$, 
whence also
\[
\exists n \ \Box_U  \forall x \ 
\Box_{\restrict{U}{n}}(x\in J \rightarrow \varphi (x)) .
\]
By Lemma \ref{lemm:outsidebiginsidesmall} and Remark 
\ref{rema:outsidebig}, for large enough $n$, this implies
\[
\exists n \ \Box_U  \forall x \ 
\Box_{\restrict{U}{n}}  \varphi (x)
\]
and by Lemma \ref{lemm:weakpireflexion} (only here we use that
$\pi \in \forall \Pi_1^b$) we obtain the required
$\Box_U \forall x\ \varphi (x)$.
\end{proof}



Again, by \cite{Sha97} we note that $V$ being reflexive can never be
a sufficient condition for 
$\forall U\ [U\rhd V  \rightarrow \forall^{\forall \Pi_1^b} \pi\ 
(\Box_V \pi \rightarrow \Box_U \pi)]$.

\medskip

The main work on the Orey-H\'ajek characterization has now been done.
We can easily extract some useful, mostly well-known corollaries.

\begin{corollary}\label{coro:fijnoh}
If $U$ is a reflexive theory, then 
\[
T \vdash U\rhd V \leftrightarrow \forall x\ \Box_U \restcons{V}{x}.
\]
Here $T$ contains $\expo$ and $\rhd$ denotes smooth interpretability.
\end{corollary}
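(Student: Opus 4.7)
The plan is simply to combine the two lemmas on the top edge of the diagram in Figure \ref{pict:oh}, which together establish exactly this biconditional under the stated hypotheses.

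For the direction $\forall x\ \Box_U \restcons{V}{x} \rightarrow U\rhd V$, I would apply Lemma \ref{lemm:OH1}, which yields this implication already in \sonetwo without any assumption on $U$ beyond what is built into being a reasonable arithmetical theory (specifically, that $U\vdash \cons{\pred}$, which holds automatically). Since $T$ extends \sonetwo, the implication is available in $T$.

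For the converse direction $U\rhd V \rightarrow \forall x\ \Box_U \restcons{V}{x}$, I would invoke Lemma \ref{lemm:OH2}. This is precisely where both hypotheses of the corollary are used: the reflexivity of $U$ is essential for the proof (as it is used in the final step to contradict a low-complexity proof of $\bot$ in $U$), and the totality of exponentiation is needed because the argument requires provable $\exists \Pi_2^b$-completeness, which in turn requires $\expo$. The smoothness of the interpretation is also essential here, as remarked in the proof of Lemma \ref{lemm:OH2}, in order to avoid a use of $\collection{\Sigma_1}$ when passing between the formulations~(\ref{equa:firstbesigma}) and~(\ref{equa:secondbesigma}); since the corollary explicitly reads $\rhd$ as smooth interpretability, this poses no obstacle.

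There is no genuine obstacle beyond verifying that the ambient theory $T$ is strong enough to host both sub-arguments simultaneously: Lemma \ref{lemm:OH1} needs only \sonetwo, and Lemma \ref{lemm:OH2} needs \sonetwo plus the totality of exponentiation. Since $T \supseteq \sonetwo + \expo$ by hypothesis, both are available, and the corollary follows by taking the conjunction of the two implications.
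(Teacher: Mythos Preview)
Your proposal is correct and matches the paper's intended argument: the corollary is stated without proof precisely because it is extracted directly from Lemma~\ref{lemm:OH1} (for the right-to-left direction, needing only \sonetwo) and Lemma~\ref{lemm:OH2} (for the left-to-right direction, using reflexivity of $U$, $\expo$, and smoothness). Your explanation of where each hypothesis is used is accurate and complete.
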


\begin{corollary}\label{coro:Vrefl}
(In \sonetwo:) If $V$ is a reflexive theory, then the following are equivalent.
\begin{enumerate}
\item \label{item:int}
$U\rhd V$

\item \label{item:picons}
$\exists^{U\mbox{-}{\sf Cut}} J \, 
\forall^{\Pi_1} \pi\ (\Box_V \pi \rightarrow \Box_U \pi^J)$

\item \label{item:oh}
$\exists^{U\mbox{-}{\sf Cut}} J \, \forall x\ \Box_U {\sf Con}_x^J(V)$

\end{enumerate}
\end{corollary}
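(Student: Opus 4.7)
The plan is to close the cycle $(1) \Rightarrow (2) \Rightarrow (3) \Rightarrow (1)$ in \sonetwo, invoking reflexivity of $V$ only where it is needed. The first step, $(1) \Rightarrow (2)$, is free: it is precisely the final clause of Lemma~\ref{lemm:OH6}, which derives $U \rhd V \rightarrow \exists^{U\mbox{-}{\sf Cut}} J \, \forall^{\Pi_1} \pi\ (\Box_V \pi \rightarrow \Box_U \pi^J)$ from Pudl\'ak's Lemma alone and does not need $V$ to be reflexive. For $(2) \Rightarrow (3)$, fix the $U$-cut $J$ given by (2). Reflexivity of $V$ reads $\forall x\ \Box_V \restcons{V}{x}$, and each $\restcons{V}{\bar x}$ is uniformly in $x$ a $\forall\Pi_1^b$-formula, hence a $\Pi_1$-formula; instantiating the universally quantified $\pi$ in (2) with $\pi = \restcons{V}{\bar x}$ therefore yields $\forall x\ \Box_U (\restcons{V}{\bar x})^J$, which is exactly $\forall x\ \Box_U {\sf Con}_x^J(V)$.

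For $(3) \Rightarrow (1)$, which carries the bulk of the work, the plan is to transplant the formal proof of Lemma~\ref{lemm:OH1} into the cut setting, using Theorem~\ref{theo:kuthenkin} in place of Theorem~\ref{theo:formalhenkin}. Fix the $U$-cut $J$ witnessing (3). Reasoning inside $U$ and without substantial induction, case-split on the dichotomy $\forall x\ {\sf Con}_x^J(V)$ versus $\exists b\, ({\sf Con}_b^J(V) \wedge \neg {\sf Con}_{b+1}^J(V))$. In the first case, find a definable sub-cut $I \subseteq J$ with $I(x) \rightarrow {\sf Con}_x^J(V)$, introduce the non-standard provability predicate $\Box^\ast_V \varphi := \exists x \in I\, \exists p \in J\ \bewijs{V,x}{p, \varphi}$, and run the cut version of the Henkin construction to manufacture a $k: U \rhd V$ satisfying $\Box_U(\Box_V \varphi \rightarrow \varphi^k)$. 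In the second case, follow Feferman's truncated Henkin construction up to $b$ as in Lemma~\ref{lemm:OH1}: the resulting term model is complete, subformula-closed and witness-closed only for formulas below $b$, but that is still enough to specify an interpretation over $J$.

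The main obstacle will be verifying that both the case split and the Henkin construction go through entirely within \sonetwo, with no slip into $\collection{\Sigma_1}$ or $\expo$. This reduces to checking that all hypothetical proofs and Henkin approximations remain inside $J$: since $J$ is by Definition~\ref{definition:definableCut} closed under $\omega_1$ and $\sharp$, the bounds appealed to in the proof of Theorem~\ref{theo:kuthenkin} are absorbed. Once this is in place for the unrelativized Henkin argument, adding the $J$-relativization to ${\sf Con}_x$ in the Feferman layer changes nothing essential, since it merely tightens a $\Delta_1^b$-condition on axioms. This analysis shows that all of (1), (2) and (3) are equivalent in \sonetwo under the reflexivity of $V$.
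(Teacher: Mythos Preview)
Your cycle $(1)\Rightarrow(2)\Rightarrow(3)\Rightarrow(1)$ and the lemmas you invoke for each step match the paper's proof exactly: Lemma~\ref{lemm:OH6} for the first arrow, reflexivity of $V$ instantiated at $\pi=\restcons{V}{\bar x}$ for the second, and a cut-relativized rerun of Lemma~\ref{lemm:OH1} via Theorem~\ref{theo:kuthenkin} for the third.

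There is, however, one genuine slip in your elaboration of $(3)\Rightarrow(1)$. The dichotomy you propose inside $U$,
\[
\forall x\ {\sf Con}_x^J(V)\quad\text{versus}\quad \exists b\,\big({\sf Con}_b^J(V)\wedge\neg{\sf Con}_{b+1}^J(V)\big),
\]
is \emph{not} available without induction: passing from $\exists x\,\neg{\sf Con}_x^J(V)$ to a least such $x$ is a $\Sigma_1$ least-number principle, which \sonetwo (and hence $U$ in general) does not supply. The paper's version of the split, which you should use instead, is the purely logical one
\[
\forall x\,\big({\sf Con}_x^J(V)\to{\sf Con}_{x+1}^J(V)\big)\quad\text{versus}\quad \exists b\,\big({\sf Con}_b^J(V)\wedge\neg{\sf Con}_{b+1}^J(V)\big).
\]
In the first branch one does not have $\forall x\,{\sf Con}_x^J(V)$ outright; one only has progressiveness together with ${\sf Con}_0^J(V)$, and it is precisely this that lets you shorten to a sub-cut $I$ on which ${\sf Con}_x^J(V)$ holds. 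With this correction your argument goes through and coincides with the paper's.
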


\begin{proof}
This is part of Theorem 2.3 from \cite{Sha97}. 
$(\ref{item:int}) \Rightarrow (\ref{item:picons})$ is already proved in
Lemma \ref{lemm:OH6}, 
$(\ref{item:picons}) \Rightarrow (\ref{item:oh})$ follows
from the transitivity of $V$ and 
$(\ref{item:oh}) \Rightarrow (\ref{item:int})$ is a sharpening of 
Lemma \ref{lemm:OH1}.
which closely follows Theorem \ref{theo:kuthenkin}.
Note that $\rhd$ may denote denote smooth or theorems interpretability.
\end{proof}

\begin{corollary}\label{coro:theimpliessmooth}
If $V$ is reflexive, then 
\[
\sonetwo \vdash U \rhd_t V \leftrightarrow U\rhd_s V .
\]
\end{corollary}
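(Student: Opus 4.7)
The plan is to prove the two directions separately, using Corollary \ref{coro:Vrefl} as the main engine for the non-trivial direction, so that the entire statement becomes essentially a packaging of results already assembled in this section.

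For the direction $U \rhd_s V \Rightarrow U \rhd_t V$, no reflexivity assumption on $V$ is needed. I would simply chain the unconditional arrows in Figure \ref{pict:smooth}: the implication $\rhd_{sa} \to \rhd_{st}$ is proven in \sonetwo with the same interpretation as witness (this is one of the cases established in Theorem \ref{theo:intynotions}), and then $\rhd_{st} \to \rhd_t$ is obtained trivially by dropping the uniform size bounds. So the same $j$ witnesses both sides of this implication.

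For the direction $U \rhd_t V \Rightarrow U \rhd_s V$, which is where reflexivity of $V$ enters, the plan is to route through Corollary \ref{coro:Vrefl}. Fix $j: U \rhd_t V$. Apply Lemma \ref{lemm:OH6}; although it is phrased for $\rhd$, Remark \ref{rema:theoremsenough} guarantees that Pudl\'ak's Lemma, and hence also Lemma \ref{lemm:OH6}, remains valid already for theorems interpretability. This produces a $U$-cut $J$ such that $\forall^{\Pi_1}\pi\ (\Box_V \pi \to \Box_U \pi^J)$, which is exactly item (\ref{item:picons}) of Corollary \ref{coro:Vrefl}. Since $V$ is reflexive, the equivalence $(\ref{item:picons}) \Leftrightarrow (\ref{item:int})$ in Corollary \ref{coro:Vrefl} then delivers a smooth interpretation $k: U \rhd V$.

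The conceptual content handled inside Corollary \ref{coro:Vrefl} is that reflexivity of $V$ bridges cut-conservativity and smooth interpretability: reflexivity yields $\forall x\ \Box_V \restcons{V}{x}$, and since each $\restcons{V}{x}$ is uniformly $\forall\Pi_1^b$ in $x$, the cut-conservativity gives $\forall x\ \Box_U \restcons{V}{x}^J$, feeding into the Henkin-on-a-cut construction of Theorem \ref{theo:kuthenkin} to build the smooth interpretation. The main obstacle, really, is conceptual rather than technical: one must be comfortable with the fact that the witnessing interpretation changes in the non-trivial direction, i.e.\ the $k$ produced here need not coincide with the starting $j$, so the corollary asserts equivalence of the two \emph{notions} of interpretability and not of individual interpretations.
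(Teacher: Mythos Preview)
Your proposal is correct and follows essentially the same route as the paper's proof, which simply cites Remark \ref{rema:theoremsenough} and Corollary \ref{coro:Vrefl}; you have merely unpacked how these two ingredients fit together (and added the easy direction via Theorem \ref{theo:intynotions}). Your closing observation that the witnessing interpretation may change in the non-trivial direction is exactly the right caveat, and is in line with the paper's earlier remark in the proof of Theorem \ref{theo:intynotions} that for reflexive $V$ the two notions coincide but via different interpretations.
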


\begin{proof}
By Remark \ref{rema:theoremsenough} and Corollary \ref{coro:Vrefl}.
\end{proof}

\begin{corollary}\label{coro:UVrefl}
If $U$ and $V$ are both reflexive theories we have that the 
following are provably equivalent in \sonetwo.
\begin{enumerate}
\item \label{item:int1}
$U\rhd V$

\item \label{item:picons1}
$\forall^{\forall \Pi_1^b} \pi\ (\Box_V \pi \rightarrow \Box_U \pi)$

\item \label{item:oh1}
$\forall x\ \Box_U \restcons{V}{x}$

\end{enumerate}
\end{corollary}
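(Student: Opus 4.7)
The plan is to prove the equivalence by closing a cycle $(1)\Rightarrow(2)\Rightarrow(3)\Rightarrow(1)$, drawing the three implications from the previously established lemmas and making sure that every step remains inside \sonetwo. The key design choice is to \emph{avoid} the direct route $(1)\Rightarrow(3)$ via Lemma \ref{lemm:OH2}, since that implication was only proved in a meta-theory that proves totality of exponentiation, which \sonetwo does not. Routing through (2) instead sidesteps that issue completely.

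For $(1)\Rightarrow(2)$, I would apply Lemma \ref{lemm:OH6} to the reflexive theory $U$ (sequentiality is part of the standing background hypothesis on our numberized theories). That lemma is formalized in \sonetwo and delivers precisely $\forall^{\forall\Pi_1^b}\pi\,(\Box_V\pi\to\Box_U\pi)$ from $U\rhd V$, so nothing further is needed beyond quoting it.

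For $(2)\Rightarrow(3)$, I would appeal to Lemma \ref{lemm:OH4}: the reflexivity of $V$ is exactly what one needs, because each $\restcons{V}{\overline{x}}$ is a $\forall\Pi_1^b$-formula provable in $V$, and $\forall\Pi_1^b$-conservativity then transports provability of these sentences from $V$ to $U$. Once more the lemma is already formalized in \sonetwo, so the implication $(2)\Rightarrow(3)$ comes for free.

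For $(3)\Rightarrow(1)$, I would use Lemma \ref{lemm:OH1}, which gives $\forall n\,\Box_U\restcons{V}{n}\to U\rhd V$ in \sonetwo with no reflexivity hypothesis whatsoever. Composing the three arrows and checking that their formalizations compose inside \sonetwo yields the corollary. The only real subtlety — and the reason to insist on going through (2) — is the absence of exponentiation in \sonetwo; once the cycle is chosen as above, the proof is essentially a bookkeeping appeal to Lemmas \ref{lemm:OH6}, \ref{lemm:OH4} and \ref{lemm:OH1}.
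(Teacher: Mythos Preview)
Your proposal is correct and follows exactly the same route as the paper: the paper's proof also closes the cycle $(1)\Rightarrow(2)\Rightarrow(3)\Rightarrow(1)$ via Lemmas \ref{lemm:OH6}, \ref{lemm:OH4} and \ref{lemm:OH1}, and explicitly remarks that this order is chosen precisely to avoid the dependence on $\expo$ incurred by the direct implication $(1)\Rightarrow(3)$ of Lemma \ref{lemm:OH2}.
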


\begin{proof}
If we go $(\ref{item:int1}) \Rightarrow (\ref{item:picons1}) \Rightarrow 
(\ref{item:oh1}) \Rightarrow (\ref{item:int1})$ we do not need the
totality of $\expo$ that was needed for 
$(\ref{item:int1}) \Rightarrow (\ref{item:oh1})$.
\end{proof}

As an application we can, for example,
see that $\pa \rhd \pa + \formal{InCon}(\pa)$. It is well known
that \pa is essentially reflexive which means that any finite extension of it is reflexive. So, we use Corollary 
\ref{coro:UVrefl} and, it is sufficient to show that
$\pa + \formal{InCon}(\pa)$ is $\Pi_1$-conservative over
\pa.

So, suppose that 
$\pa + \formal{InCon}(\pa) \vdash \pi$ for some
$\Pi_1$-sentence $\pi$. In other words
$\pa \vdash \Box \bot \rightarrow \pi$. We shall now see
that 
$\pa \vdash \Box \pi \rightarrow \pi$, which by L\"ob's Theorem gives us
$\pa \vdash \pi$.

Thus, in \pa, assume $\Box \pi$. Suppose for a contradiction
that $\neg \pi$. By $\Sigma_1$-completeness we also get
$\Box \neg \pi$, which yields $\Box \bot$ with the assumption
$\Box \pi$. But we have $\Box \bot \rightarrow \pi$ and we conclude $\pi$.
A contradiction, so that indeed $\pa \rhd \pa + \formal{InCon}(\pa)$.

\section*{Acknowledgements}
I am grateful to Lev Beklemishev, F\'elix Lara and Albert Visser for pointers to the literature and helpful discussions. 

This research has been funded by Grant 2014 SGR 437 from the Catalan government and by Grant MTM2014-59178-P from the Spanish government.

\bibliographystyle{plain}
\bibliography{Oldlevref}

\end{document}